 \def\NZQ{\mathbb}               
 \def\NN{{\NZQ N}}
 \def\ZZ{{\NZQ Z}}
 \def\PP{{\NZQ P}}
 \def\frk{\mathfrak}               
 \def\mm{{\frk m}}
 \def\G{{\mathcal G}}
 \def\ab{{\mathbf a}}
 \def\opn#1#2{\def#1{\operatorname{#2}}} 
 \opn\chara{char} \opn\length{\ell} \opn\pd{pd} \opn\rk{rk}
 \opn\projdim{proj\,dim} \opn\injdim{inj\,dim} \opn\rank{rank}
 \opn\depth{depth} \opn\grade{grade} \opn\height{height}
 \opn\embdim{emb\,dim} \opn\codim{codim}
 \opn\Tr{Tr} \opn\bigrank{big\,rank}
 \opn\superheight{superheight}\opn\lcm{lcm}
 \opn\trdeg{tr\,deg}
 \opn\reg{reg} \opn\lreg{lreg} \opn\ini{in} \opn\lpd{lpd}
 \opn\size{size} \opn\sdepth{sdepth}
 \opn\link{link}\opn\fdepth{fdepth}\opn\lex{lex}
 \opn\tr{tr}
\opn\Ap{Ap}
 \opn\div{div} \opn\Div{Div} \opn\cl{cl} \opn\Cl{Cl}
 \opn\Spec{Spec} \opn\Supp{Supp} \opn\supp{supp} \opn\Sing{Sing}
 \opn\Ass{Ass} \opn\Min{Min}\opn\Mon{Mon}
 \opn\Ann{Ann} \opn\Rad{Rad} \opn\Soc{Soc}
 \opn\Im{Im} \opn\Ker{Ker} \opn\Coker{Coker} \opn\Am{Am}
 \opn\Hom{Hom} \opn\Tor{Tor} \opn\Ext{Ext} \opn\End{End}
 \opn\Aut{Aut} \opn\id{id}
 \opn\nat{nat}
 \opn\pff{pf}
 \opn\Pf{Pf} \opn\GL{GL} \opn\SL{SL} \opn\mod{mod} \opn\ord{ord}
 \opn\Gin{Gin} \opn\Hilb{Hilb}\opn\sort{sort}
 \opn\PF{PF}\opn\Ap{Ap}
 \opn\tail{tail}
 \opn\length{length}
 \opn\aff{aff} \opn
\opn\relint{relint} \opn\st{st}
 \opn\lk{lk} \opn\cn{cn} \opn\core{core} \opn\vol{vol}  \opn\inp{inp} \opn\nilpot{nilpot}
 \opn\link{link} \opn\star{star}\opn\lex{lex}\opn\set{set}
 \opn\width{wd}
 \opn\Fr{F}
 \opn\QF{QF}
 \opn\G{G}
 \opn\type{type}\opn\res{res}
 \opn\gr{gr}
 \def\pot#1#2{#1[\kern-0.28ex[#2]\kern-0.28ex]}
 \opn\dirlim{\underrightarrow{\lim}}
 \opn\inivlim{\underleftarrow{\lim}}
 \let\to=\rightarrow
 \def\Implies{\ifmmode\Longrightarrow \else
         \unskip${}\Longrightarrow{}$\ignorespaces\fi}
 \def\implies{\ifmmode\Rightarrow \else
         \unskip${}\Rightarrow{}$\ignorespaces\fi}
 \def\iff{\ifmmode\Longleftrightarrow \else
         \unskip${}\Longleftrightarrow{}$\ignorespaces\fi}
 \newtheorem{Theorem}{Theorem}[section]
 \newtheorem{Lemma}[Theorem]{Lemma}
 \newtheorem{Corollary}[Theorem]{Corollary}
 \newtheorem{Proposition}[Theorem]{Proposition}
 \newtheorem{Remark}[Theorem]{Remark}
 \let\epsilon\varepsilon
 \let\kappa=\varkappa
 \def\qed{\ifhmode\textqed\fi
       \ifmmode\ifinner\quad\qedsymbol\else\dispqed\fi\fi}
 \def\textqed{\unskip\nobreak\penalty50
        \hskip2em\hbox{}\nobreak\hfil\qedsymbol
        \parfillskip=0pt \finalhyphendemerits=0}
 \def\dispqed{\rlap{\qquad\qedsymbol}}
 \opn\dis{dis}
 \def\pnt{{\raise0.5mm\hbox{\large\bf.}}}
 \opn\Lex{Lex}
\begin{document}
\title {Cohen-Macaulay criteria for projective monomial curves via Gr\"obner bases}

\author {J\"urgen Herzog,  Dumitru I.\ Stamate}

\address{J\"urgen Herzog, Fachbereich Mathematik, Universit\"at Duisburg-Essen, Campus Essen, 45117
Essen, Germany} \email{juergen.herzog@uni-essen.de}

\address{Dumitru I. Stamate, Faculty of Mathematics and Computer Science, University of Bucharest, Str. Academiei 14, Bucharest -- 010014, Romania }
\email{dumitru.stamate@fmi.unibuc.ro}

\dedicatory{ }

\begin{abstract}
We prove  new characterizations based on Gr\"obner bases for the Cohen-Macaulay property of a projective monomial curve.
\end{abstract}

\thanks{}

\subjclass[2010]{Primary 13H10, 13P10,   16S36; Secondary   13F20,  14M25}


\keywords{arithmetically Cohen-Macaulay, projective monomial curve, revlex, Gr\"obner basis, numerical semigroup, Ap\'ery set}

\maketitle

\setcounter{tocdepth}{1}
\tableofcontents

\section*{Introduction}

Let $K$ be any field.
For any sequence of  distinct positive integers $\ab: a_1, \dots, a_n$ we denote $I(\ab)$ the kernel of the $K$-algebra homomorphism $\phi: S \to K[t]$
where $S=K[x_1, \dots, x_n]$ and $\phi(x_i)=t^{a_i}$ for $i=1, \dots, n$. The image of this map is the semigroup ring over $K$ of the semigroup $H$ generated by $a_1, \dots, a_n$.
We do not insist that $\ab$ is a minimal generating set for $H$.

In the following, we assume that $\gcd(a_1, \dots, a_n)=1$ and $a_n> a_i$ for all $i<n$.
We note that the homogenization of $I(a)$ with respect to the variable $x_0$  is again a toric ideal, namely it is the kernel of the $K$-algebra map $\psi: S[x_0]\to K[s,t]$
where $\psi(x_i)=t^{a_i}s^{a_n-a_i}$ for $1\leq i \leq n$ and $\psi(x_0)=s^{a_n}$.
The image of the map $\psi$ is the subalgebra $K[\mathcal{A}]$ of $K[t,s]$ generated by the monomials whose exponents are read from the columns of the matrix
\begin{equation}
\label{eq:A}
\mathcal{A}= \left(
\begin{matrix} 0 &a_1 & \dots  & a_{n-1} & a_n \\
a_n& a_n-a_1 &\dots &a_n- a_{n-1} & 0
\end{matrix}
\right).
\end{equation}
In case $K$ is an algebraically closed field, $I(\ab)$ is the vanishing ideal of the affine curve $C(\ab)$ given
parametrically by $t\mapsto (t^{a_1}, \dots, t^{a_n})$, while $I(\ab)^h$ is the vanishing ideal of the projective closure $\overline{C(\ab)}$ of $C(\ab)$ in $\mathbb{P}^n$,
 given parametrically by $[s:t] \mapsto[s^{a_n}: t^{a_1}s^{a_n-a_1}:\dots :t^{a_{n-1}}s^{a_n-a_{n-1}}: t^{a_n}]$. Curves of this type are called projective monomial curves.

The projective monomial curve $\overline{C(\ab)}$ is called arithmetically Cohen-Macaulay if its vanishing ideal  $I(\ab)^h$ is a Cohen-Macaulay ideal.
It is known that this is the case if and only if $t^{a_n},s^{a_n}$ is a regular sequence on $K[\mathcal{A}]$. This fact is a special case of \cite[Theorem 2.6]{GSW}.

 Arithmetically Cohen-Macaulay curves are not rare among the projective monomial curves.  It follows from  Vu's \cite[Theorem 5.7]{Vu}  that for any fixed $\ab$,
the curve $\overline{C(a_1+k, \dots, a_n+k)}$ is arithmetically Cohen-Macaulay for all $k\gg 0$.
In small embedding dimension,  Bresinsky, Schenzel and Vogel \cite{BSV} characterized the arithmetically Cohen-Macaulay projective monomial curves in $\PP^3$  by the property that $I(\ab)^h$ is generated by at most 3 elements.

In the context of numerical semigroups, Gr\"obner bases   have been used in
algorithms in \cite{Morales} and \cite{Roune}  to find  the Frobenius number of the numerical semigroup $H$ generated by $\ab$, or to characterize when is the
tangent cone of the semigroup algebra $K[H]$ Cohen-Macaulay, see \cite{Arslan}, \cite{AMS}.

One of   the main results in this paper is that $\overline{C(\ab)}$ is arithmetically Cohen--Macaulay if and only if $\ini(I(\ab)^h)$, respectively $\ini(I(\ab))$,
is a Cohen-Macaulay ideal, see Theorem~\ref{thm:main} (b) and (c).
Here the initial ideals are taken for a  reverse lexicographic order  for which $x_n$, $x_0$, respectively $x_n$ are the smallest variables.
 These conditions are also equivalent to condition (g) which says that
$$\ini(x_n, I(\ab))=(x_n,\ini(I(\ab))).$$
 Yet other equivalent properties are (d) and (e), namely that $x_n$ , respectively $x_0, x_n$, do not divide  any minimal monomial generator of $\ini(I(\ab))$ and of $\ini(I(\ab)^h)$, respectively, where the monomial orders are as before.

A Cohen-Macaulay criterion for a  simplicial semigroup ring  in terms of the  Gr\"obner basis of its  defining ideal is given by Kamoi  in \cite[Corollary 2.9]{Kamoi-nagoya} and \cite[Theorem 1.2]{Kamoi-commalg}.
In the particular case  considered in this paper,  equivalences (a), (d) and  (e) in Theorem \ref{thm:main} sharpen Kamoi's mentioned results and his \cite[Corollary 3.6]{Kamoi-commalg}.


The  dual sequence of $\ab$ is defined to be the sequence $\ab': a_n-a_1, \dots, a_n-a_{n-1}, a_n$.
The projective monomial curves associated to the  sequences $\ab$ and $\ab'$ are obviously  isomorphic.
So it is natural  to compare the ideals $I(\ab)$ and $I(\ab')$ and their reduced Gr\"obner bases.
That is the focus of Section \ref{sec:dual}.

For $w=(w_1, \dots, w_n) \in \ZZ^n$ we denote $\langle w, \ab \rangle= \sum_{i=1}^n  w_i a_i$, and we set $L(\ab)=\{ w \in \ZZ^n:  \langle w, \ab \rangle=0 \}$.
Obviously, $I(\ab)$ is just the lattice ideal of the lattice $L(\ab)$. Indeed, $I(\ab)$ is generated  by the binomials $f_w=x^{w^+}-x^{w^-}$ with $w\in L(\ab)$.
Let $\sigma:\ZZ^n \to \ZZ^n$ be the map given by $\sigma(w_1,\dots, w_n)=(w_1, \dots, w_{n-1}, -\sum_{i=1}^n w_i)$.
Then  $\sigma$ is an automorphism of the group $\ZZ^n$ such that  $\sigma^2=\id_{\ZZ^n}$ which induces an isomorphism between $L(\ab)$ and $L(\ab')$.
In particular,  $L(\ab')=(f_{\sigma(w)}\:\; w\in L(\ab))$.

In general,  a minimal set of binomial generators of $L(\ab)$ is not mapped to a minimal set of binomial generators of $L(\ab')$, see Remark \ref{rem:smth}.
However, in Theorem~\ref{thm:duality} we show that $\overline{C(\ab)}$ is arithmetically Cohen-Macaulay if and only if $\ini(f_{\sigma(w)})=\ini(f_w)$ for all $f_w \in \mathcal{G}$, where  $\mathcal{G}$ denotes the reduced Gr\"obner basis of $I(\ab)$ with respect to a reverse lexicographic monomial order with $x_n$ the smallest variable.  Moreover,  these conditions are also equivalent to the fact that  $f_w \in \mathcal{G}$ if and only if $f_{\sigma(w)} \in \mathcal{G}'$, for all $w \in \ZZ^n$, where $\mathcal{G}'$  is the reduced Gr\"obner basis of $I(\ab')$ with respect to the same monomial order.

Let $H$ denote the numerical semigroup generated by $\ab$. For any nonzero element  $h$ in $H$ its Ap\'ery set is defined as  $\Ap(H,h)=\{x \in H: x-h \notin H\}$.
For $h\in \Ap(H,a_n)$ we denote $\varphi_\ab(h)$ the smallest monomial in $S$ for the reverse lexicographic order such that its $\ab$-degree equals $h$. The close relationship
between the ideals $I(\ab)$ and $I(\ab')$ is also outlined by the fact that the curve $\overline{C(\ab)}$
is arithmetically Cohen-Macaulay if and only if  $$\ini(x_n, I(\ab)) =\ini(x_n, I(\ab')),$$ see Theorem \ref{thm:apery-cm}. Here one uses a reverse lexicographic order with $x_n$ the smallest variable.
For the proof, a key observation is that the latter equation is equivalent to the fact that for all $h$ in $\Ap(H,a_n)$ the $\ab'$-degree of $\varphi_\ab(h)$ is in $\Ap(H', a_n)$, where $H'$
denotes the semigroup generated by the dual sequence $\ab'$.  As a consequence, in Corollary \ref{cor:cn} we recover a criterion of Cavaliere and Niesi (\cite{CN})  for $\overline{C(\ab)}$ to be arithmetically Cohen-Macaulay,  see also \cite[Example 1.4]{RGU} .

When $n=3$, it is known from \cite{He-semi} that $\mu(I(\ab)) \leq 3$.
However, we give examples showing that  a reduced Gr\"obner basis may be arbitrarily large, see Proposition \ref{prop:big-ini}.
In case $\overline{C(\ab)}$ is arithmetically Cohen-Macaulay, in Proposition \ref{initialbound} we show that $\mu(\ini(I(\ab))) \leq {a_n \choose n-2}$.

In Section \ref{sec:applications} we apply Theorem \ref{thm:main} to test the arithmetically Cohen-Macaulay property for two families of
projective monomial curves in $\PP^3$ that have appeared in the literature.
For these families of 4-generated numerical semigroups which were introduced by Arslan (\cite{Arslan}) and by Bresinsky (\cite{Bresinsky}), respectively, we show that
the corresponding projective monomial curve is  (respectively, is not) arithmetically Cohen-Macaulay.

\section{A Cohen-Macaulay criterion via Gr\"obner bases}

The following lemma appears in \cite[ Exercise 5, page 392]{CLO}. Lacking reference to a proof,  we prefer to  provide one.
\begin{Lemma}
\label{lemma:revlex}
Let $I$ be an ideal in the polynomial ring $S=K[x_1,\dots, x_n]$ and $I^h \subset S[x_0]$ its homogenization with respect to the variable $x_0$.
We denote $<$ any reverse lexicographic monomial order on $S$ and $<_0$ the reverse lexicographic  monomial order on $S[x_0]$  extended from $S$ and such that $x_0$ is the least variable.

If $f_1, \dots, f_r$ is  the reduced Gr\"obner basis for $I$ with respect to $<$, then
$f_1^h, \dots, f_r^h$ is the reduced Gr\"obner basis for  $I^h$ with respect to $<_0$.
Moreover, $\ini_{<_0}(I^h) = (\ini_< (I) )S[x_0]$.
\end{Lemma}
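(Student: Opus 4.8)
The plan is to reduce everything to a single compatibility statement between the two orders: for every $f\in S$ one has $\ini_{<_0}(f^h)=\ini_<(f)$, with \emph{no} extra power of $x_0$. First I would record that a reverse lexicographic order is graded, so that $\ini_<(f)$ is a monomial of top degree $\deg f$; consequently homogenizing the leading monomial of $f$ appends $x_0^0$. To prove the displayed identity I would compare, term by term, the monomials of $f^h$ under $<_0$ with the monomials of $f$ under $<$: writing two monomials of $f$ as $x^a$ and $x^b$, their homogenizations carry $x_0$-exponents $\deg f-|a|$ and $\deg f-|b|$, and since $x_0$ is the least variable the revlex tie-break on $S[x_0]$ inspects the $x_0$-coordinate first. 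A short case split (according to whether $|a|=|b|$ or not) shows that $x^a x_0^{\deg f-|a|}>_0 x^b x_0^{\deg f-|b|}$ if and only if $x^a> x^b$ in $S$, so the order is preserved under homogenization and the maximal term of $f^h$ is the homogenization of the maximal term of $f$. This compatibility is precisely where the reverse lexicographic hypothesis (and $x_0$ being smallest) is essential, and I expect it to be the main obstacle, since the analogous statement fails for, say, a lexicographic order.

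Granting the identity, membership is immediate: each $f_i\in I$ gives $f_i^h\in I^h$, and $\ini_{<_0}(f_i^h)=\ini_<(f_i)$. To see that $f_1^h,\dots,f_r^h$ is a Gr\"obner basis I would use dehomogenization. Let $g\in I^h$ be homogeneous of degree $d$ and put $g_*=g(x_1,\dots,x_n,1)\in I$, using that dehomogenization is a ring map carrying each generator $f^h$ of $I^h$ back to $f\in I$. If $d'=\deg g_*$, then $d\ge d'$ and $g=x_0^{\,d-d'}(g_*)^h$, hence $\ini_{<_0}(g)=x_0^{\,d-d'}\,\ini_<(g_*)$ by the identity above. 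Since $f_1,\dots,f_r$ is a Gr\"obner basis of $I$, the monomial $\ini_<(g_*)$ is divisible by some $\ini_<(f_i)=\ini_{<_0}(f_i^h)$, and therefore so is $\ini_{<_0}(g)$. As $I^h$ is homogeneous, its initial ideal is generated by the leading terms of its homogeneous elements, so $\ini_{<_0}(I^h)=(\ini_{<_0}(f_i^h):i)=(\ini_<(f_i):i)S[x_0]=\ini_<(I)\,S[x_0]$. This simultaneously proves the Gr\"obner basis property and the ``Moreover'' assertion.

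Finally I would transfer reducedness from $f_1,\dots,f_r$ to $f_1^h,\dots,f_r^h$. Homogenization preserves coefficients, so each $f_i^h$ is monic. Every monomial of $f_i^h$ has the form $x^a x_0^{\deg f_i-|a|}$ with $x^a$ a monomial of $f_i$; if it were divisible by $\ini_{<_0}(f_j^h)=\ini_<(f_j)$ for some $j\neq i$, then, since $\ini_<(f_j)$ involves no $x_0$, already $\ini_<(f_j)\mid x^a$, contradicting that $f_1,\dots,f_r$ is reduced. Because the monomials $\ini_<(f_i)$ minimally generate $\ini_<(I)$ in $S$, they still minimally generate $\ini_<(I)S[x_0]$, since adjoining a variable creates no new divisibilities among monomials not involving it; hence the leading terms $\ini_{<_0}(f_i^h)$ are pairwise non-dividing. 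Thus $f_1^h,\dots,f_r^h$ is a monic, minimal Gr\"obner basis whose elements are reduced against one another, i.e.\ the reduced Gr\"obner basis of $I^h$.
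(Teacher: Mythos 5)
Your proof is correct, and while it rests on the same key compatibility as the paper's --- namely that for the given reverse lexicographic orders one has $\ini_{<_0}(f^h)=\ini_<(f)$ for every $f\in S$, with no power of $x_0$ appearing --- the route from there to the Gr\"obner basis property is genuinely different. The paper invokes the known fact (\cite[Proposition 3.15]{EH}) that $f_1^h,\dots,f_r^h$ is a Gr\"obner basis of $I^h$ for the \emph{block} order $<'$ extending $<$, and then transfers this to $<_0$ by observing that $\ini_{<'}(I^h)$ and $\ini_{<_0}(I^h)$ contain a common monomial ideal and have the same Hilbert function, hence coincide. You instead argue directly by dehomogenization: every homogeneous $g\in I^h$ of degree $d$ satisfies $g=x_0^{\,d-d'}(g_*)^h$ with $g_*=g|_{x_0=1}\in I$ and $d'=\deg g_*$ (no cancellation occurs since $g$ is homogeneous, so $g_*\neq 0$), whence $\ini_{<_0}(g)=x_0^{\,d-d'}\ini_<(g_*)$ is divisible by some $\ini_<(f_i)=\ini_{<_0}(f_i^h)$; since $I^h$ is homogeneous this already generates $\ini_{<_0}(I^h)$ and yields the ``Moreover'' identity at the same time. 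Your argument is self-contained and more elementary --- it needs neither the block order nor the Hilbert-function comparison --- at the cost of verifying the factorization $g=x_0^{\,d-d'}(g_*)^h$ and the membership $g_*\in I$; the paper's argument is shorter given the cited result and sidesteps dehomogenization entirely. The reducedness step (every monomial of $f_j^h$ is $x^a x_0^{e}$ with $x^a$ a monomial of $f_j$, and a divisor free of $x_0$ must already divide $x^a$) is essentially identical in both proofs.
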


\begin{proof}
Let $\mathcal{F}^h= \{f_1^h, \dots, f_r^h\}$.
It is proved in \cite[Proposition 3.15]{EH} that $\mathcal{F}^h$ is  a Gr\"obner basis for $I^h$ with respect to the block order  $<'$ on $S[x_0]$ which is defined as
$$
x^\alpha x_0^a <' x^\beta x_0^b \text{ if } (x^\alpha <x^\beta) \text{ or } (x^\alpha=x^\beta \text{ and }a<b)
$$
for all $\alpha, \beta \in \NN ^n$ and all nonnegative integers $a,b$.

Let $f$ be a nonzero polynomial in $I$. We write $f=m_1+\dots+ m_q$ as a sum of monomials with $m_i>m_{i-1}$ for $2\leq i \leq q$.
Then $\deg m_i \geq \deg m_{\color{blue} i-1}$ for $2\leq i \leq q$ and
$$
f^h=m_1+m_2 x_0^{\varepsilon_2}+\dots+ m_q x_0^{\varepsilon_q},
$$ where $\varepsilon_i= \deg m_1-\deg m_i$ for $i=2, \dots, q$.
Moreover, in the above decomposition of $f^h$  the monomials are listed decreasingly with respect to $<_0$.
Thus $\ini_{<_0}( f^h)=m_1=\ini_< (f)=\ini_{<'} (f^h)$ for all $f$ in $I$. It follows that
\begin{eqnarray*}
\ini_{<'} (I^h)= \ini_{<'}(f_i^h: 1\leq i \leq r)  &=&(\ini_{<'}(f_i^h):1\leq i \leq r)  \\
&=& (\ini_{<_0} (f_i^h): 1\leq i \leq r) \subseteq \ini_{<_0}(I^h).
\end{eqnarray*}
 Since the homogeneous ideals $\ini_{<'} (I^h)$ and $\ini_{<_0}(I^h)$ have the same Hilbert function,
 we conclude that they are equal and that  $\mathcal{F}^h$ is a Gr\"obner basis for $I^h$ with respect to $<_0$.

For any nonzero $f$ in $S$, $\tail(f)$ denotes the difference between $f$ and its leading term. 

Assume there exist $i, j$ such that $\ini_{<_0} (f_i^h)$ divides  a monomial in $\tail(f_j^h)$, i.e. $\ini_{<}(f_i)$ divides $m x_0^\varepsilon$ with $m$ a monomial in $\tail(f_j)$.
This implies that $\ini_< (f_i)$ divides $m$, which contradicts the fact that $\mathcal{F}$ is the reduced Gr\"obner basis for $I$ with respect to $<$.
Therefore $\mathcal{F}^h$ is the  reduced Gr\"obner basis for $I^h$ with respect to $<_0$.
\end{proof}

The following theorem is one of the main results of this paper.
For any monomial ideal $I$ we let $G(I)$ denote the unique minimal set of monomial generators for $I$.

\begin{Theorem}
\label{thm:main} Let $\ab: a_1, \dots, a_n$ be a sequence of positive integers with $a_n>a_i$ for all $i<n$.
Denote $<$ any reverse lexicographic order on $S=K[x_1, \dots, x_n]$ such  that $x_n$ is the smallest variable and $<_0$ the induced reverse lexicographic order on $S[x_0]$, where $x_n>x_0$.
The following conditions are equivalent:
\begin{enumerate}
\item [{\em (a)}]the projective monomial curve $\overline{C(\ab)}$ is  arithmetically Cohen-Macaulay;
\item [{\em (b)}] $\ini_{<_0}( I(\ab)^h)$ is a Cohen-Macaulay ideal;
\item [{\em (c)}] $\ini_< (I(\ab))$ is a  Cohen-Macaulay ideal;
\item [{\em (d)}] $x_n$ does not divide any element of $G(\ini_< (I(\ab)))$;
\item [{\em (e)}]$x_n$ and $x_0$ do not divide any element of $G(\ini_{<_0}(I(\ab)^h))$;
 \item [{\em (f)}] $x_n$ does not divide any element of $G(\ini_{<_0}(I(\ab)^h))$;
\item [{\em (g)}]$\ini_<(x_n, I(\ab))= (x_n, \ini_<(I(\ab))$.
\end{enumerate}
\end{Theorem}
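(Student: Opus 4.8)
The plan is to transfer the entire question to the single standard graded homogeneous ideal $I(\ab)^h$ and its initial ideal, and to read off each condition from the behaviour of the two variables $x_0,x_n$ under the reverse lexicographic order. Set $R=S[x_0]/I(\ab)^h=K[\mathcal{A}]$, a two–dimensional standard graded domain. As recalled in the introduction, condition (a) is equivalent to $x_0,x_n$ (the images of $s^{a_n},t^{a_n}$) being a regular sequence on $R$. Since $R$ is a domain, $x_0$ is automatically a nonzerodivisor, so, writing $\bar J\subseteq S$ for the image of $I(\ab)^h$ under $x_0\mapsto 0$ (a standard graded homogeneous ideal, with $R/x_0R\cong S/\bar J$), condition (a) becomes simply: \emph{$x_n$ is a nonzerodivisor on $S/\bar J$}.

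First I would dispose of the formal equivalences using Lemma~\ref{lemma:revlex}. That lemma gives $\ini_{<_0}(I(\ab)^h)=\ini_<(I(\ab))\,S[x_0]$, so $x_0$ divides no minimal generator and $G(\ini_{<_0}(I(\ab)^h))=G(\ini_<(I(\ab)))$; this yields (d)$\Leftrightarrow$(e)$\Leftrightarrow$(f) at once, and $S[x_0]/\ini_{<_0}(I(\ab)^h)=(S/\ini_<(I(\ab)))[x_0]$ gives (b)$\Leftrightarrow$(c), since a polynomial extension is Cohen--Macaulay exactly when its base is. I would also record the key identity $\ini_<(\bar J)=\ini_<(I(\ab))$: indeed $x_0$ is a nonzerodivisor on $R$ and, divided by no minimal generator, also on $S[x_0]/\ini_{<_0}(I(\ab)^h)$; comparing the Hilbert functions (which initial ideals preserve) forces $\ini_{<_0}(I(\ab)^h+(x_0))=\ini_{<_0}(I(\ab)^h)+(x_0)$, and setting $x_0=0$ gives the claim. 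Note that $\bar J\neq I(\ab)$ in general, even though their initial ideals agree.

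The crux is (a)$\Leftrightarrow$(d). Here I would invoke the defining property of the reverse lexicographic order with smallest variable $x_n$, applied to the \emph{homogeneous} ideal $\bar J$: if $g$ is homogeneous and $x_n\mid\ini_<(g)$, then a monomial of $g$ in which $x_n$ appeared to a strictly smaller power would be $<$-larger than $\ini_<(g)$, which is impossible, so $x_n\mid g$. This yields $\ini_<(\bar J:x_n)=\ini_<(\bar J):x_n$, whence $x_n$ is a nonzerodivisor on $S/\bar J$ if and only if it is one on $S/\ini_<(\bar J)=S/\ini_<(I(\ab))$; for a monomial ideal the latter holds precisely when $x_n$ divides no minimal generator, i.e. (d). Combined with the previous paragraph this is (a)$\Leftrightarrow$(d). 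The main obstacle — and the reason the statement is nontrivial — is that this argument must be run on $\bar J$ and \emph{not} on $I(\ab)$: although $<$ is reverse lexicographic, $I(\ab)$ is not homogeneous in the standard grading, and indeed $x_n$ is always a nonzerodivisor on the domain $S/I(\ab)=K[H]$, so no information about (a) could be extracted there. Homogeneity of $\bar J$ is exactly what makes the reverse-lex transfer available.

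It remains to close the cycle and to treat (g). For (b)$\Rightarrow$(a) I use Gr\"obner degeneration: $\depth S[x_0]/I(\ab)^h\ge\depth S[x_0]/\ini_{<_0}(I(\ab)^h)$ with equal Krull dimension, so Cohen--Macaulayness of the initial ideal forces it for $I(\ab)^h$, i.e. (a). For (d)$\Rightarrow$(b), condition (e) says $x_0,x_n$ divide no minimal generator of the monomial ideal $\ini_{<_0}(I(\ab)^h)$, so they form a regular sequence on $S[x_0]/\ini_{<_0}(I(\ab)^h)$, giving depth $\ge 2=\dim$ and hence (b). Together with the free equivalences this settles (a)--(f). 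Finally, for (g)$\Leftrightarrow$(d) I pass to the $\ab$-grading ($\deg x_i=a_i$), in which $\ini_<(I(\ab))$ is homogeneous with the same Hilbert series as $K[H]$, namely $\bigl(\sum_{h\in\Ap(H,a_n)}z^{h}\bigr)/(1-z^{a_n})$. The exact sequence defined by multiplication by $x_n$ shows that $\dim_K S/(x_n,\ini_<(I(\ab)))=a_n$ exactly when $x_n$ is a nonzerodivisor on $S/\ini_<(I(\ab))$; since one always has $(x_n,\ini_<(I(\ab)))\subseteq\ini_<((x_n,I(\ab)))$ with $\dim_K S/\ini_<((x_n,I(\ab)))=\dim_K K[H]/(t^{a_n})=|\Ap(H,a_n)|=a_n$, equality of these two ideals — which is precisely (g) — is equivalent to $x_n$ being a nonzerodivisor on $S/\ini_<(I(\ab))$, i.e. to (d).
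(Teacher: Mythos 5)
Your proposal is correct, and its skeleton is the same as the paper's: the formal equivalences (b)$\iff$(c) and (d)$\iff$(e)$\iff$(f) via Lemma~\ref{lemma:revlex}, the implication (b)$\implies$(a) by Gr\"obner degeneration, and (e)$\implies$(b) by observing that $x_0,x_n$ is a regular sequence on the monomial quotient are exactly the steps in the paper. Where you genuinely diverge is at the two places where the paper leans on outside results. For the crux, the paper proves (a)$\implies$(e) by combining \cite[Lemma 2.4]{GSW} with the Bayer--Stillman theorem \cite[Theorem 15.13]{Eis}; you instead prove (a)$\iff$(d) directly by passing to $\bar J=I(\ab)^h|_{x_0=0}$, establishing $\ini_<(\bar J)=\ini_<(I(\ab))$ by a containment-plus-Hilbert-function argument, and then running the homogeneous revlex argument ($x_n\mid \ini_<(g)$ forces $x_n\mid g$) to get $\ini_<(\bar J:x_n)=\ini_<(\bar J):x_n$ --- in effect reproving, by hand, precisely the special case of Bayer--Stillman that is needed; your remark that this argument must be run on the homogeneous ideal $\bar J$ and not on $I(\ab)$ is exactly the right point. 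For (d)$\iff$(g), the paper cites \cite[Proposition 1.4]{CS} (a paper in preparation), whereas you give a self-contained $\ab$-graded colength count: $\dim_K S/\ini_<(x_n,I(\ab))=|\Ap(H,a_n)|=a_n$ always, while $\dim_K S/(x_n,\ini_<(I(\ab)))=a_n$ exactly when $x_n$ is a nonzerodivisor on $S/\ini_<(I(\ab))$, so the inclusion $(x_n,\ini_<(I(\ab)))\subseteq\ini_<(x_n,I(\ab))$ is an equality precisely under (d). What each approach buys: the paper's proof is shorter, deferring the two nontrivial ingredients to the literature; yours is fully self-contained (a real advantage at (d)$\iff$(g), since \cite{CS} is unpublished), and your Ap\'ery-set counting is exactly the technique the paper develops afterwards in Propositions~\ref{prop:apery} and \ref{prop:bijection} and Theorem~\ref{thm:apery-cm}, so your argument anticipates that section. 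Two small blemishes, neither a gap: ``divided by no minimal generator'' should read ``divides no minimal generator'', and the step ``setting $x_0=0$ gives the claim'' deserves one more line (for a homogeneous ideal $J\supseteq(x_0)$ in $S[x_0]$ and revlex with $x_0$ smallest, one has $\ini_{<_0}(J)=\ini_<(J\cap S)S[x_0]+(x_0)$), but this is routine.
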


\begin{proof}
Lemma \ref{lemma:revlex} implies that $G(\ini_<(I(\ab)))= G(\ini_{<_0} (I(\ab)^h))$. Therefore (b) \iff (c) and (d) \iff    (f) \iff (e).
The implication  (b) \implies (a) is a general fact, see for example \cite[Corollary 3.3.5]{HH}.
Assuming (e), we get that $x_0, x_n$ is a regular sequence modulo $\ini_{<_0}(I(\ab)^h)$, which implies (b).

Since $x_n$ is regular on $S/I(\ab)$, which is a domain, using \cite[Proposition 1.4]{CS} we have that $x_n$ is regular on $S/\ini_<(I(\ab))$ if
and only if $\ini_<(x_n, I(\ab))= (x_n, \ini_<(I(\ab))$. This shows (d) \iff (g).

It remains to prove that (a) \implies (e). It is known that the ring $K[\mathcal{A}]$ is Cohen-Macaulay if and only if $s^{a_n},t^{a_n}$ is a regular sequence on it, see \cite[Lemma 2.4]{GSW}.
That is equivalent to  $x_0, x_n$ being a regular sequence on $S[x_0]/I(\ab)^h$.
By a result of Bayer and Stillman (see \cite[Theorem 15.13]{Eis}),  this  implies (e).

\end{proof}

\section{Dual sequences}
\label{sec:dual}

Given $\ab: a_1, \dots, a_n$ a sequence  of distinct nonnegative integers such that $a_n >a_i$ for all $i<n$,
the dual sequence is defined to be $\ab': a_n-a_1, \dots, a_n-a_{n-1}, a_n$. It is clear that this procedure is a duality: $(\ab')'=\ab$.

The projective monomial curves associated to the  sequences $\ab$ and $\ab'$ are isomorphic.
Indeed, the ideals $I(\ab)^h$ and $I(\ab')^h$  are the kernel of the maps on $S[x_0]$ sending $x_0,\dots, x_n$
to the monomials having as exponent vectors the columns of the matrix $\mathcal{A}$ in \eqref{eq:A}, and respectively the columns of the matrix
$$
\label{eq:A'}
\mathcal{A'}= \left(
\begin{matrix} 0 &a_n-a_1 &   \dots &a_n-a_{n-1} & a_n \\
a_n& a_1  &\dots & a_{n-1} & 0
\end{matrix}
\right).
$$
This implies that the polynomials in the ideal $I(\ab')^h$ are obtained from those in $I(\ab)^h$ by switching the variables $x_0$ and $x_n$.

 In this section we compare the Gr\"obner bases of the ideals $I(\ab)$ and $I(\ab')$ with respect to a reverse lexicographic order, in connection to the Cohen-Macaulay property of the associated projective monomial curve.

Let $\sigma:\ZZ^n \to \ZZ^n$ be the map given by $\sigma(w_1,\dots, w_n)=(w_1, \dots, w_{n-1}, -\sum_{i=1}^n w_i)$. It is easy to see that $\sigma$ is an automorphism of the group $\ZZ^n$ and that  $\sigma^2=\id_{\ZZ^n}$.
 For $w=(w_1, \dots, w_n) \in \ZZ^n$ we denote $\langle w, \ab \rangle= \sum_{i=1}^n  w_i a_i$. We set
$$
L(\ab)=\{ w \in \ZZ^n:  \langle w, \ab \rangle=0 \}.
$$

\begin{Lemma} With notation as above, the map $\sigma$ induces an isomorphism between the groups $L(\ab)$ and $L(\ab')$.
\end{Lemma}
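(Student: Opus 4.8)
The plan is to lean on the two facts just recorded about $\sigma$, namely that it is a group automorphism of $\ZZ^n$ and that it is an involution, $\sigma^2 = \id_{\ZZ^n}$. Granting these, it is enough to prove the single inclusion $\sigma(L(\ab)) \subseteq L(\ab')$. Indeed, since the map $\sigma$ and the duality $\ab \mapsto \ab'$ are both insensitive to which of the two sequences we start from, and since $(\ab')' = \ab$, the very same inclusion applied to $\ab'$ yields $\sigma(L(\ab')) \subseteq L((\ab')') = L(\ab)$. Applying $\sigma$ to the first inclusion and using $\sigma^2 = \id_{\ZZ^n}$ gives $L(\ab) \subseteq \sigma(L(\ab'))$, and combining this with the second inclusion forces $\sigma(L(\ab')) = L(\ab)$, whence $\sigma(L(\ab)) = L(\ab')$. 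As $\sigma$ is a bijection on $\ZZ^n$, its restriction then supplies a group isomorphism $L(\ab) \xrightarrow{\ \sim\ } L(\ab')$, which is the claim.

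The substance is thus the one inclusion, and this is a direct calculation. I would fix $w = (w_1, \dots, w_n) \in L(\ab)$, so that $\langle w, \ab \rangle = \sum_{i=1}^n w_i a_i = 0$, and set $s = \sum_{i=1}^n w_i$, so that $\sigma(w) = (w_1, \dots, w_{n-1}, -s)$. Recalling that $\ab'$ has entries $a_n - a_i$ for $i < n$ and $a_n$ in the last slot, I compute
\[
\langle \sigma(w), \ab' \rangle = \sum_{i=1}^{n-1} w_i (a_n - a_i) + (-s)a_n = a_n \sum_{i=1}^{n-1} w_i - \sum_{i=1}^{n-1} w_i a_i - s a_n .
\]
The key move is to feed the defining relation $\langle w, \ab \rangle = 0$ into the middle sum by peeling off the $n$-th coordinate: $\sum_{i=1}^{n-1} w_i a_i = -w_n a_n$, while $\sum_{i=1}^{n-1} w_i = s - w_n$. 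Substituting both gives
\[
\langle \sigma(w), \ab' \rangle = a_n(s - w_n) + w_n a_n - s a_n = 0 ,
\]
so $\sigma(w) \in L(\ab')$, as needed.

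There is no genuine obstacle here beyond keeping the bookkeeping straight when the final coordinate is separated from the first $n-1$; the real leverage comes from the involution property proved just above, which is precisely what converts this single computation into the full isomorphism statement without having to repeat the argument in the opposite direction. The only point deserving a word of care is the legitimacy of invoking the same inclusion for both $\ab$ and $\ab'$, and this is justified by the symmetry of the definitions of $\sigma$ and of the dual sequence under the exchange $\ab \leftrightarrow \ab'$ together with $(\ab')' = \ab$.
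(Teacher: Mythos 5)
Your proof is correct and follows essentially the same route as the paper: a direct computation showing $\langle\sigma(w),\ab'\rangle=0$ for $w\in L(\ab)$, with the reverse inclusion obtained by the symmetry $(\ab')'=\ab$ and the involution $\sigma^2=\id_{\ZZ^n}$ (the paper compresses this into a single ``similarly''). Your bookkeeping differs only cosmetically, peeling off the $n$-th coordinate rather than summing over all $n$ indices as the paper does.
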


\begin{proof}
If $w=(w_1, \dots, w_n) \in \ZZ^n$ with $\sum_{i=1}^n w_i a_i=0$ then
$$
\left(\sum_{i=1}^n(a_n-a_i)w_i\right)-\left(\sum_{i=1}^n w_i \right)a_n=0,
$$
 equivalently $\langle \sigma(w), \ab' \rangle=0$ and $\sigma(w)\in L(\ab')$. Similarly, if $w' \in L(\ab')$ then $\sigma(w') \in L(\ab'')=L(\ab)$.
\end{proof}

If the entries of the vector $\alpha=(\alpha_1, \dots, \alpha_n)$ are nonnegative integers we let $x^\alpha=x_1^{\alpha_1} \cdots x_n^{\alpha_n}$.
For $w=(w_1,\dots, w_n) \in\ZZ^n$,  let $w^+$ and $w^-$ be the unique vectors with nonnegative entries and disjoint supports such that $w=w^+-w^-$. We denote $f_w=x^{w^+}-x^{w^-}$. It is clear that $f_{-w}=-f_w$. Therefore, a difference of two monomials with disjoint supports can be identified with a vector $w\in \ZZ^n$.

It is known that $I(\ab)=(f_w: w\in L(\ab))$, hence $I(\ab')=(f_{\sigma(w)}:w \in L(\ab))$.
However, it is not always true that $\sigma$ maps a minimal generating set (or a Gr\"obner basis)
 for $I(\ab)$ into a minimal generating set (or a Gr\"obner basis) for $I(\ab')$,
see Proposition \ref{prop:big-ini} and Remark \ref{rem:smth}.

\begin{Theorem}
\label{thm:duality}
Let $\ab:a_1, \dots, a_n$ be a sequence of nonnegative integers with $a_n>a_i$ for all $i<n$.
Let $\mathcal{G}$ and $\mathcal{G}'$ be the reduced Gr\"obner bases of $I(\ab)$ and $I(\ab')$, respectively, with respect to a reverse lexicographic monomial order on $S=K[x_1, \dots, x_n]$   such that $x_n$ is the smallest variable.
The following  conditions are equivalent:
\begin{enumerate}
\item[{\em(a)}] the projective monomial curve $\overline{C(\ab)}$ is arithmetically Cohen-Macaulay;
\item[{\em(b)}] $\ini(f_{\sigma(w)})=\ini(f_w)$, for all $f_w \in \mathcal{G}$;
\item[{\em(c)}]  $f_w \in \mathcal{G}$ \iff $f_{\sigma(w)} \in \mathcal{G}'$, for all $w \in \ZZ^n$.
\end{enumerate}
\end{Theorem}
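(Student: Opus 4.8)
My plan is to establish (a)$\Leftrightarrow$(b) by a monomial-by-monomial analysis of the dual map $\sigma$, and then to obtain (b)$\Leftrightarrow$(c) by passing to homogenizations, where $\sigma$ becomes the transposition of $x_0$ and $x_n$. For (a)$\Leftrightarrow$(b), fix $w\in L(\ab)$ with $w\neq 0$, write $w=(w_1,\dots,w_{n-1},w_n)$ and set $s=\sum_i w_i$, so that $\sigma(w)=(w_1,\dots,w_{n-1},-s)$. Since the first $n-1$ coordinates of $w$ and $\sigma(w)$ agree, on each side the two monomials of $f_w$ and of $f_{\sigma(w)}$ share the same $x_n$-free part and differ only in the exponent of $x_n$. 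Because $x_n$ is the smallest variable for $<$, I would prove the dichotomy
\[ \ini_<(f_{\sigma(w)})=\ini_<(f_w)\iff x_n\nmid\ini_<(f_w)\iff s\,w_n\le 0. \]
If $s\,w_n\le 0$, a short case distinction on the sign of $s$ shows both leading monomials equal the common $x_n$-free monomial $x^{w^+}$ or $x^{w^-}$ (the leading term avoids $x_n$ since $x_n$ is smallest); if $s\,w_n>0$ the two leading monomials sit on opposite sides, hence have the distinct $x_n$-free parts $x^{w^+}$ and $x^{w^-}$ (distinct because $w\in L(\ab)$ and $w_n\neq 0$ force the truncation of $w$ to be nonzero). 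As $G(\ini_<(I(\ab)))=\{\ini_<(f_w):f_w\in\mathcal{G}\}$, condition (b) says exactly that $x_n$ divides no minimal generator of $\ini_<(I(\ab))$, which is equivalence (d) of Theorem \ref{thm:main}; hence (a)$\Leftrightarrow$(b).

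For the passage to (c), let $\tau$ be the automorphism of $S[x_0]$ interchanging $x_0$ and $x_n$; as noted in the paper, $I(\ab')^h=\tau(I(\ab)^h)$. A direct computation of homogenizations yields the key identity $\tau(f_w^h)=f_{\sigma(w)}^h$ for all $w\in L(\ab)$, so that $\{f_{\sigma(w)}^h:f_w\in\mathcal{G}\}=\tau(\mathcal{G}^h)$. By Lemma \ref{lemma:revlex}, $\mathcal{G}^h$ is the reduced Gr\"obner basis of $I(\ab)^h$ for $<_0$; applying the change of variables $\tau$ shows $\tau(\mathcal{G}^h)$ is the reduced Gr\"obner basis of $I(\ab')^h$ for the reverse lexicographic order with $x_1>\dots>x_{n-1}>x_0>x_n$, and dehomogenizing it shows that $\sigma(\mathcal{G})=\{f_{\sigma(w)}\}$ always \emph{generates} $I(\ab')$. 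Since homogenization is injective and $\mathcal{G}'$ is the reduced Gr\"obner basis of $I(\ab')$, condition (c) is equivalent to $\sigma(\mathcal{G})$ being that reduced Gr\"obner basis, i.e. (granting that $\sigma(\mathcal{G})$ is autoreduced, which I would check directly) to the statement that the leading terms $\ini_<(f_{\sigma(w)})$ generate $\ini_<(I(\ab'))$.

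The implication (a)$\Rightarrow$(c) is then short. Assuming (a), equivalence (b) gives $\ini_<(f_{\sigma(w)})=\ini_<(f_w)$ for every $f_w\in\mathcal{G}$; as $f_{\sigma(w)}\in I(\ab')$, each minimal generator $\ini_<(f_w)$ of $\ini_<(I(\ab))$ lies in $\ini_<(I(\ab'))$, whence $\ini_<(I(\ab))\subseteq\ini_<(I(\ab'))$. These two ideals of $S$ have the same Hilbert function: $S[x_0]/I(\ab)^h\cong S[x_0]/I(\ab')^h$ through $\tau$, and by Lemma \ref{lemma:revlex} their Hilbert functions agree with those of $S[x_0]/\ini_<(I(\ab))S[x_0]$ and $S[x_0]/\ini_<(I(\ab'))S[x_0]$. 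An inclusion of ideals with equal Hilbert functions is an equality, so $\ini_<(I(\ab))=\ini_<(I(\ab'))$, and combined with $\ini_<(f_{\sigma(w)})=\ini_<(f_w)$ this shows the leading terms of $\sigma(\mathcal{G})$ generate $\ini_<(I(\ab'))$; thus $\sigma(\mathcal{G})=\mathcal{G}'$, giving (c).

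The main obstacle is the converse (c)$\Rightarrow$(a). The difficulty is that ``$x_n\mid\ini_<(f_w)$'' and ``$x_n\mid\ini_<(f_{\sigma(w)})$'' are equivalent, so the symmetry between $\ab$ and $\ab'$ makes every purely combinatorial attempt circular: for instance Theorem \ref{thm:main}(d) applied to $\ab'$ merely restates (a). To break the symmetry I would argue by contraposition, using the genuine Hilbert-function input rather than a symmetric identity. If $\overline{C(\ab)}$ is not arithmetically Cohen--Macaulay, then neither is $\overline{C(\ab')}$, so by Theorem \ref{thm:main}(d) the ideal $\ini_<(I(\ab'))$ has a minimal generator $x^{\beta}$ divisible by $x_n$; writing the corresponding element of $\mathcal{G}'$ as $x^{\beta}-x^{\gamma}$, the tail $x^{\gamma}$ is $x_n$-free, standard for $\ini_<(I(\ab'))$, and of the same $\ab'$-degree as $x^{\beta}$. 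The plan is to show that such an $x_n$-divisible generator cannot be produced by the leading terms $\ini_<(f_{\sigma(w)})$ without forcing two distinct standard monomials of the ideal $(\ini_<(f_{\sigma(w)}):f_w\in\mathcal{G})$ to share an $\ab'$-degree; equivalently, that this ideal is a \emph{proper} subideal of $\ini_<(I(\ab'))$ exactly when a bad binomial is present. Since $\sigma(\mathcal{G})\subseteq I(\ab')$, such a coincidence means $\sigma(\mathcal{G})$ is not a Gr\"obner basis of $I(\ab')$, so (c) fails. This standard-monomial (Hilbert-function) rigidity, which is not available from the symmetric divisibility combinatorics, is precisely the step that forces (a), and it is where I expect the real work to lie.
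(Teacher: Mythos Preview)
Your treatment of (a)$\Leftrightarrow$(b) via the dichotomy $\ini(f_{\sigma(w)})=\ini(f_w)\iff x_n\nmid\ini(f_w)\iff s\,w_n\le 0$ is correct and is essentially the paper's argument, stated crisply. Your route to (a)$\Rightarrow$(c) through the Hilbert-function comparison is also sound; the paper reaches $\ini(I(\ab))=\ini(I(\ab'))$ via Theorem~\ref{thm:main} instead, but the two arguments are equivalent in spirit, and the autoreducedness check you defer is exactly the one the paper carries out using $\tail(f_{\sigma(w)})=\tail(f_w)\cdot x_n^a$.

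The genuine gap is in (c)$\Rightarrow$(a). Your reformulation of (c) as ``the leading terms $\ini(f_{\sigma(w)})$ generate $\ini(I(\ab'))$'' (granted autoreducedness) is wrong: you have silently dropped the requirement that elements of a \emph{reduced} Gr\"obner basis have leading coefficient $1$. This omission is fatal to your plan. The Remark immediately following the theorem exhibits $\ab:4,13,19$, for which $\overline{C(\ab)}$ is not arithmetically Cohen--Macaulay, yet $\{f_{\sigma(w)}:f_w\in\mathcal{G}\}$ is a minimal, fully interreduced Gr\"obner basis of $I(\ab')$; the only obstruction to its being $\mathcal{G}'$ is that one element has leading coefficient $-1$. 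Thus your criterion would wrongly declare (c) to hold there, and no ``standard-monomial rigidity'' argument on the ideal generated by the $\ini(f_{\sigma(w)})$ can distinguish this situation from (c).

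The paper's proof of (c)$\Rightarrow$(a) is a two-line observation exploiting precisely the sign condition you discarded. If (a) fails, Theorem~\ref{thm:main} gives $f_w\in\mathcal{G}$ with $x_n\mid\ini(f_w)$; since $f_w\in\mathcal{G}$ has leading coefficient $1$, one has $\ini(f_w)=x^{w^+}$, hence $w_n>0$. Then $\sum_i\sigma(w)_i=-w_n<0$, so $\deg x^{\sigma(w)^-}>\deg x^{\sigma(w)^+}$ and the leading coefficient of $f_{\sigma(w)}$ is $-1$. Therefore $f_{\sigma(w)}\notin\mathcal{G}'$, and (c) fails. The asymmetry you were seeking is already built into the convention $f_w=x^{w^+}-x^{w^-}$: condition (c) distinguishes $f_{\sigma(w)}$ from $f_{-\sigma(w)}=-f_{\sigma(w)}$, and this sign sensitivity is exactly what breaks the ``circularity'' you worried about.
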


\begin{proof}
We first prove that conditions (a) and (b) are equivalent.

(a)\implies (b):  We assume that $I(\ab)^h$ is a Cohen-Macaulay ideal in $S[x_0]$. We
pick $f_w$ in $\mathcal{G}$, where
$w=(w_1, \dots, w_n)$.  We denote $w'=\sigma(w)=(w'_1, \dots, w'_n)$.
Since the leading coefficient $LC(f_w)=1$ we get   that $\ini (f_w)=x^{w^+}$,
hence $d=\sum_{i=1}^n w_i \geq 0$ and $w'_n=-\sum_{i=1}^n w_i= -d \leq 0$.

 By Theorem \ref{thm:main} we obtain that $x_n$ does not divide $\ini(f_w)$, hence $w_n \leq 0$. Consequently, $(w')^+=w^+$.
Also, $\sum_{i=1}^n w_i'=-w_n \geq 0$, and hence $\deg x^{(w')^+} \geq \deg x^{(w')^-}$.
We distinguish two cases.

Firstly, if $w_n <0$ then $\deg x^{(w')^+} > \deg x^{(w')^-}$, hence $\ini(f_{w'})=x^{(w')^+}=x^{w^+}=\ini (f_w)$.
Moreover, $x^{(w')^{-}}=x^{w^-}\cdot x_n^{d+w_n}$.

Secondly, in case $w_n=0$ we get that $\deg x^{(w')^+} = \deg x^{(w')^-}$.
Now, if $d=0$ then $w'=w$.
If $d>0$, for the chosen monomial order we obtain that
$x^{(w')^+} > x^{(w')^-}$, because $x^{(w')^-}=x^{w^-} \cdot x_n^d$.

Thus  $\ini(f_{w'})=x^{w^+}=\ini (f_w)$ in all cases, and  property (b) holds.

(b) \implies (a): If $I(\ab)^h$ is not a Cohen-Macaulay ideal, by Theorem \ref{thm:main}
there exists $f_w$ in $\mathcal{G}$ such that  $x_n$ divides $\ini (f_w)$. Let $w=(w_1,\dots, w_n)$ and $w'=\sigma(w)=(w'_1, \dots, w'_n)$.
Since  $LC(f_w)=1$ we get that $\ini(f_w)= x^{w^+}$, hence $w_n >0$ and $\sum_{i=1}^n w_i \geq 0$.

There exists $i_0\neq n$ such that $w_{i_0}>0$, otherwise,
 since $\langle w,\ab\rangle=0$ we get that $\sum_{i=1}^{n-1} a_i (-w_i)=w_n a_n \geq -(w_1+\dots+w_{n-1})a_n > -\sum_{i=1}^{n-1}a_i w_i$, which is a contradiction.

Since $\sum_{i=1}^n w_i'=-w_n <0$, we obtain that $\deg x^{(w')^-}>\deg x^{(w')^+}$ and $\ini (f_{w'})=x^{(w')^-}$.
As $i_0<n$, we have that $w'_{i_0}=w_{i_0}>0$, hence $x_{i_0}$ divides $x^{(w')^+}$. On the other hand, condition (b) implies that
 $\ini (f_{\sigma(w)})=\ini (f_w)$, and therefore $x_{i_0}$  divides $x^{(w')^-}$ as well, which gives a contradiction.
We conclude that the projective monomial curve $\overline{C(\ab)}$ is  Cohen-Macaulay.

Next we prove that (a),(b) \implies (c).
  From the proof above of the equivalence (a)\iff (b),  we see that under the assumption that (a) (hence also (b)) holds,  for all $f_w$ in $\mathcal{G}$ one has
\begin{equation}
\label{eq:blah}
\tail (f_{\sigma(w)})=\tail (f_w) \cdot x_n^a \text{  for some integer }a .
\end{equation}
From Theorem \ref{thm:main} we  have  that $\ini(I(\ab'))=\ini(I(\ab))$, therefore property (b) implies that
 $\mathcal{G}''=\{ f_{\sigma(w)}: f_w\in \mathcal{G} \}$ is a minimal Gr\"obner basis for $I(\ab')$. We show that it is reduced.

Let $f_w \in \mathcal{G}$ with $w=(w_1, \dots, w_n)$ and $\sigma(w)=(w_1', \dots, w_n')$. Then $\ini(f_w)=x^{w^+}$ and $\deg (x^{w^+}) \geq \deg (x^{w^-})$.
Condition (a) and Theorem \ref{thm:main} imply that  $w_n \leq 0$. Thus $\sum_{i=1}^n w_i'= - w_n \geq 0$,
 which implies that $\deg(x^{(w')^+}) \geq \deg(x^{(w')^-})$.

If $w_n=0$ then $w'$ is homogeneous and either $w'=w$ (if $\sum_{i=1}^n w_i =0$), or $x_n$ divides $x^{(w')^-}$ (if $\sum_{i=1}^n w_i >0$), hence
$x^{(w')^+}=\ini(f_{\sigma(w)})$ and $LC(f_{\sigma(w)})=1$.

If $w_n<0$ then $\deg(x^{(w')^+}) >\deg( x^{(w')^-})$, and  again  $LC(f_{\sigma(w)})=1$.

If there are $f_w$ and $f_{\widetilde{w}}$ in $\mathcal{G}$ such that $\ini(f_{\sigma(w)})$ divides
$\tail(f_{\sigma(\widetilde{w})})$, then, since $x_n$ is not in the support of $\ini (f_w)=\ini (f_{\sigma(w)})$, by using \eqref{eq:blah} we get
that $\ini(f_w)$ divides $\tail(f_{\widetilde{w}})$. This contradicts the fact that $\mathcal{G}$ is the reduced Gr\"obner basis for $I(\ab)$.
Hence $\mathcal{G}''=\mathcal{G}'$, which proves (c).

For (c) \implies (a): If we assume that (c) holds, but $I(\ab)^h$ is not Cohen-Macaulay,
 then by Theorem \ref{thm:main} there exists $f_w$ in $\mathcal{G}$ such that
 $x_n$ divides $\ini(f_w)=x^{w^+}$.  Let $\sigma(w)=(w_1', \dots, w_n')$.  Since $\sum_{i=1}^n w_i'= - w_n <0$, it follows that $\deg(x^{\sigma(w)^-}) > \deg(x^{\sigma(w)^+})$, hence
$\ini(f_{\sigma(w) }) =x^{\sigma(w)^-}$. On the other hand, property (c) implies that $f_{\sigma(w)} \in \mathcal{G}'$, hence $\ini(f_{\sigma(w)})= x^{\sigma(w)^+}$, which is a contradiction.
Therefore, property (a) holds.

This ends the proof of the theorem.
\end{proof}

\begin{Remark}
{\em
The fact that the involution $\sigma$ maps some (minimal) Gr\"obner basis for $I(\ab)$ into a (minimal) Gr\"obner basis for $I(\ab')$ is not enough to  imply
that the curve  $\overline{C(\ab)}$  is arithmetically Cohen-Macaulay.

 Indeed, let $\ab: 4,13,19 $.
Then $\ab': 15, 6,19$. A Singular \cite{Sing} computation shows that
\begin{eqnarray*}
\mathcal{G} &=& \{y^5-x^2z^3, x^3y^2-z^2, x^5z-y^3, x^8-yz \}, \text{ and } \\
\mathcal{G'} &=& \{y^5-x^2, x^3y^2-z^3, y^3z^3-x^5,  x^8-yz^6\}
\end{eqnarray*}
are the reduced Gr\"obner bases with respect to the reverse lexicographic order with $x>y>z $ for the ideals $I(\ab)$ and $I(\ab')$, respectively. Therefore,  $\overline{C(\ab)}$ and  $\overline{C(\ab')}$ are not  arithmetically Cohen-Macaulay, by Theorem \ref{thm:main}.

One has  $\mathcal{G}=\{ f_{w_1}, f_{w_2}, f_{w_3},  f_{w_4} \}$, where   $w_1=(-2, 5,-3)$, $w_2=(3,2,-2)$, $w_3=(5,-3,1)$ and $w_4=(8,-1,-1)$.
Since $\sigma(w_1)= (-2,5,0)$, $\sigma(w_2)=(3,2,-3)$, $\sigma(w_3)=(5,-3,-3)$ and $\sigma(w_4)= (8,-1,-6)$
we note that
$$
\mathcal{G}'=  \{ f_{\sigma(w_1)}, f_{\sigma(w_2)}, -f_{\sigma(w_3)} , f_{\sigma(w_4)}\}.
$$
This means that $\{  f_{\sigma(w_1)}, f_{\sigma(w_2)}, f_{\sigma(w_3)} , f_{\sigma(w_4)}\}$ is a minimal Gr\"obner basis for $I(\ab')$, although different from the reduced one $\mathcal{G}'$.
}
\end{Remark}

Let $\ab=a_1, \dots, a_n$ be a sequence of nonnegative integers with $\gcd(a_1, \dots, a_n)=1$ and $a_n>a_i$ for all $i=1, \dots, n-1$.
Our next goal is to describe the Cohen-Macaulay property of $\overline{C(\ab)}$  in terms of the Ap\'ery sets of the semigroup generated by $\ab$ or the dual sequence $\ab'$.
We recall that for a numerical semigroup $H$ and $0\neq h$ in $H$, the Ap\'ery set of $H$ with respect to $h$ is
$$
\Ap(H,h)=\{ x\in H: x-h \notin H\}.
$$
It is known that  $|\Ap(H,h)|=h$ and that the elements of $\Ap(H,h)$ have distinct residues modulo $h$.

The sequence $\ab$ induces a grading on $S=k[x_1,\dots, x_n]$ by letting $\deg (x_i)=a_i$ for all $i=1, \dots, n$.
For any monomial $x^\alpha=x_1^{r_1}\cdots x_n^{r_n}$ we denote its $\ab$-degree
by $\deg_\ab(x^\alpha)=\sum_{i=1}^n r_i a_i$.
We denote $H$ the numerical semigroup generated by $\ab$.
For any $h$ in $\Ap(H, a_n)$ we denote $\varphi_\ab(h)$ the smallest monomial in $S$  (with respect to a reverse lexicographic monomial order
where $x_n$ is the smallest variable) such that its  $\ab$-degree equals $h$.
Since $h-a_n \notin H$ we see that the monomial $\varphi_\ab (h)$ is in $S'=k[x_1, \dots, x_{n-1}]$.

\begin{Proposition}
\label{prop:apery}
With notation as above, for any $h$ in $\Ap(H, a_n)$  the monomial $\varphi_\ab(h) \notin \ini (x_n,I(\ab))$.
\end{Proposition}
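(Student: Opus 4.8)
The plan is to work through the isomorphism $S/(x_n, I(\ab)) \cong K[H]/(t^{a_n})$ sending $x_i \mapsto t^{a_i}$, and to exploit that this isomorphism is compatible with the $\ab$-grading. Indeed, $I(\ab)$ is generated by $\ab$-homogeneous binomials and $x_n$ is $\ab$-homogeneous of degree $a_n$, so the ideal $(x_n, I(\ab))$ and its reduced Gr\"obner basis $\mathcal{G}_0$ (with respect to the chosen reverse lexicographic order) consist of $\ab$-homogeneous elements. Computing the $\ab$-graded Hilbert function of $K[H]/(t^{a_n})$, its degree-$h$ component is spanned by the class of $t^h$ and is therefore one-dimensional exactly when $h \in \Ap(H, a_n)$ (that is, $h \in H$ but $h - a_n \notin H$), and zero otherwise. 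A first consequence I would record is that every monomial $m$ with $\deg_\ab(m) = h \in \Ap(H, a_n)$ maps to the nonzero class $t^h$, and that any two monomials of $\ab$-degree $h$ differ by an element of $I(\ab)$; hence all monomials of $\ab$-degree $h$ are congruent modulo $(x_n, I(\ab))$.

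Next I would use that the monomials not lying in $\ini(x_n, I(\ab))$, the standard monomials, form a $K$-basis of the quotient, compatible with the $\ab$-grading because $\mathcal{G}_0$ is $\ab$-homogeneous. Thus for each $h \in \Ap(H, a_n)$ there is exactly one standard monomial $m_h$ of $\ab$-degree $h$, and by the congruence above it is the normal form modulo $\mathcal{G}_0$ of every monomial of $\ab$-degree $h$. The remaining task is to identify $m_h$ with $\varphi_\ab(h)$. For this I would invoke the elementary fact that reduction modulo a Gr\"obner basis never increases the leading monomial, so the normal form of a monomial $m$ is a monomial $m_h \le m$. Since every monomial of $\ab$-degree $h$ reduces to $m_h$, it follows that $m_h \le m$ for all such $m$; therefore $m_h$ is the smallest monomial of $\ab$-degree $h$, which is precisely $\varphi_\ab(h)$ by definition. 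As $m_h$ is standard, this yields $\varphi_\ab(h) \notin \ini(x_n, I(\ab))$.

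I expect the main obstacle to be the careful handling of the normal-form step, in two respects. First, one must ensure that the normal form of a monomial of $\ab$-degree $h \in \Ap(H, a_n)$ is nonzero; this is exactly where the Ap\'ery hypothesis enters, since $h - a_n \notin H$ is what guarantees $t^h \neq 0$ in $K[H]/(t^{a_n})$, and this prevents $m$ from reducing to $0$ through the generator $x_n$. Second, to conclude $m_h \le m$ I would note that, because $(x_n, I(\ab))$ is generated by $x_n$ together with binomials, reduction of a monomial proceeds by replacing $\ini(g)\cdot w$ with $(\ini(g) - g)\cdot w$, again a single monomial, strictly smaller in the order; so the whole reduction of $m$ stays monomial and strictly decreasing until it terminates at the standard monomial $m_h$. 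Everything else is routine bookkeeping with the $\ab$-grading and the standard dictionary between $S/I(\ab)$ and $K[H]$.
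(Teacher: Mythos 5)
Your proof is correct, and it takes a genuinely different route from the paper's. The paper argues locally, by contradiction: if $\varphi_\ab(h)=\ini(F)$ for some $F\in(x_n,I(\ab))$, one may take $F$ to be $\ab$-homogeneous of degree $h$; the summand $x_nf$ in such a decomposition must vanish because $h-a_n\notin H$, so in fact $\varphi_\ab(h)\in\ini(I(\ab))$; writing $\varphi_\ab(h)=m\cdot\ini(f_w)$ with $f_w$ a binomial of $I(\ab)$ then exhibits the strictly smaller monomial $m\cdot x^{w^-}$ of the same $\ab$-degree, contradicting the minimality built into the definition of $\varphi_\ab(h)$. You instead argue globally through the $\ab$-graded isomorphism $S/(x_n,I(\ab))\cong K[H]/(t^{a_n})$: each degree $h\in\Ap(H,a_n)$ carries a one-dimensional component, hence there is exactly one standard monomial $m_h$ of that degree, and since normal-form reduction of a monomial against the ($\ab$-homogeneous, monomial-or-binomial) reduced Gr\"obner basis proceeds through a strictly decreasing chain of monomials of the same $\ab$-degree, $m_h$ is forced to be the smallest monomial of degree $h$, i.e.\ $m_h=\varphi_\ab(h)$. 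Both arguments ultimately rest on the same two facts --- the Ap\'ery condition $h-a_n\notin H$ eliminating the $x_n$-contribution, and binomial reduction producing smaller monomials of equal $\ab$-degree --- but your packaging proves more: it shows that the standard monomials in Ap\'ery degrees are exactly the $\varphi_\ab(h)$, one for each $h$, so the bijectivity of $\varphi_\ab$ stated in Proposition \ref{prop:bijection} comes out as an immediate byproduct, whereas the paper obtains it separately from Macaulay's theorem and the count $|\Ap(H,a_n)|=a_n$. The cost is a bit more machinery (homogeneity of reduced Gr\"obner bases for the positive $\ab$-grading, the graded form of Macaulay's basis theorem), where the paper's argument is more elementary and self-contained.
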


\begin{proof}
Let $h\in \Ap(H, a_n)$ and $\varphi_\ab(h)= x^\alpha$.  Assume that $x^\alpha \in \ini(x_n, I(\ab))$. Then $x^\alpha = \ini(F)$ for some $F$ in $(x_n, I(\ab))$.
Since the ideal $(x_n, I(\ab))$ is generated by  monomials and binomials which are the difference of two monomials with the same $\ab$-degree, without loss of generality we may assume that $F$ is $\ab$-homogeneous. Thus we may write
$$
x^\alpha= \ini(x_nf+f_1 f_{w_1}+ \dots +f_q f_{w_q}),
$$
where  $w_1, \dots, w_q \in L(\ab)$,  and $f, f_1, \dots, f_q$ are $\ab$-homogeneous with $h=\deg_\ab (x_n f)=\deg_\ab (f_1 f_{w_1})=\dots = \deg_\ab (f_q f_{w_q})$.

We notice that $f=0$, otherwise $h=\deg_\ab (x^\alpha)=\deg_\ab(x_n f)=  a_n+  \deg_\ab (f)$,  gives that $h-a_n \in H$,   which is false.
Hence  $x^\alpha \in \ini(I(\ab))$. The ideal $I(\ab)$ has a Gr\"obner basis of binomials, hence we can write $x^\alpha=m\cdot \ini(f_w)$ for some binomial $f_w \in I(\ab)$ and $m$ a monomial in $S'$.
Without loss of generality, we may assume $\ini (f_w)=x^{w^{+}}$. Thus $x^{w^+}>x^{w^{-}}$, which gives $\varphi_\ab(h)=x^\alpha > x^{w^{-}}m$. But $\deg_\ab (w^+)=\deg_\ab(w^-)$,
hence $\deg_\ab(x^\alpha)=\deg_\ab(x^{w^{-}}m)$, which contradicts the choice of $x^\alpha$.
Therefore,  $\varphi_\ab(h) \notin \ini (x_n,I(\ab))$.
\end{proof}

If we identify a monomial which is in $S$ and not in $\ini(x_n, I(\ab))$ with its residue class  modulo the monomial ideal $\ini(x_n, I(\ab))$, by Proposition \ref{prop:apery} the assignment $\varphi_\ab (-)$ defines a map
  from $ \Ap(H, a_n)$  into $\Mon(S/\ini(x_n, I(\ab)))$,   the $K$-basis of monomials of  $S/\ini(x_n, I(\ab))$. We prove that this is a bijection.

\begin{Proposition}
\label{prop:bijection}
The map $\varphi_\ab:  \Ap(H, a_n) \to \Mon(S/\ini(x_n, I(\ab)))$  is bijective.
\end{Proposition}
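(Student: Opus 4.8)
The plan is to split the statement into injectivity, which is essentially free, and surjectivity, which I would get either by a short direct argument or by a dimension count; since both sides are finite, after injectivity I only need one of the two.

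First, injectivity is immediate from the definition of $\varphi_\ab$. For every $h\in\Ap(H,a_n)$ one has $\deg_\ab(\varphi_\ab(h))=h$ by construction, so if $\varphi_\ab(h)=\varphi_\ab(h')$ then $h=\deg_\ab(\varphi_\ab(h))=\deg_\ab(\varphi_\ab(h'))=h'$. Thus the degree of the image recovers the source element.

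Next I would close the argument by a count. Since $|\Ap(H,a_n)|=a_n$, it suffices to show $S/\ini(x_n,I(\ab))$ has exactly $a_n$ monomials in its $K$-basis. Passing to an initial ideal preserves the $K$-dimension of the quotient (the standard monomials form a $K$-basis), so $\dim_K S/\ini(x_n,I(\ab))=\dim_K S/(x_n,I(\ab))$. The isomorphism $S/I(\ab)\cong K[H]$ sends $x_n$ to $t^{a_n}$, whence $S/(x_n,I(\ab))\cong K[H]/(t^{a_n})$, a ring whose $K$-basis is $\{t^h: h\in H\setminus(a_n+H)\}=\{t^h: h\in\Ap(H,a_n)\}$, of dimension $a_n$. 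An injection between finite sets of equal cardinality is a bijection. Alternatively, surjectivity can be shown by hand, bypassing the count: given a standard monomial $x^\alpha\notin\ini(x_n,I(\ab))$ one has $x_n\nmid x^\alpha$; set $h=\deg_\ab(x^\alpha)\in H$. If $h-a_n\in H$, write $h-a_n=\deg_\ab(m)$ for a monomial $m$, so $x^\alpha-x_n m\in I(\ab)$ and hence $x^\alpha\in(x_n,I(\ab))$, forcing $x^\alpha\in\ini(x_n,I(\ab))$, a contradiction; thus $h\in\Ap(H,a_n)$. Finally $\varphi_\ab(h)\le x^\alpha$ with equal $\ab$-degree, and a strict inequality would make $x^\alpha-\varphi_\ab(h)$ a binomial in $I(\ab)$ with leading term $x^\alpha$, again giving $x^\alpha\in\ini(I(\ab))\subseteq\ini(x_n,I(\ab))$; hence $\varphi_\ab(h)=x^\alpha$.

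The crux in either route is the same small observation: a \emph{monomial} lying in the ideal $(x_n,I(\ab))$ automatically lies in its initial ideal, since $\ini(x^\alpha)=x^\alpha$. This is what converts membership questions for the toric ideal into combinatorics of the Apéry set, and it is used both to identify $S/(x_n,I(\ab))$ with $K[H]/(t^{a_n})$ and, in the direct route, to rule out every monomial of degree $h\notin\Ap(H,a_n)$ and every monomial strictly above $\varphi_\ab(h)$. Everything else is bookkeeping with $\ab$-degrees, so I do not anticipate a serious obstacle.
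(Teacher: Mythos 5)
Your proof is correct and its main route (injectivity via $\deg_\ab$, then surjectivity by the count $\dim_K S/\ini(x_n,I(\ab))=\dim_K S/(x_n,I(\ab))=\dim_K K[H]/(t^{a_n})=a_n=|\Ap(H,a_n)|$ using Macaulay's theorem) is exactly the paper's argument. The alternative direct surjectivity argument you sketch is also sound, but it is not needed once the cardinality count is in place.
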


\begin{proof}
Let $h, h'$ in $\Ap(H, a_n)$ with $\varphi_\ab(h)=\varphi_\ab(h')$. Then $h=\deg_\ab (\varphi_\ab(h))= \deg_\ab (\varphi_\ab(h'))= h'$, and the map $\varphi_\ab$ is injective.

By Macaulay's theorem (\cite[Theorem 2.6]{EH}) the monomials in $S$ which do not belong to  $\ini(x_n, I(\ab))$ form a $K$-basis for $S/(x_n, I(\ab))$. Therefore,
\begin{eqnarray*}
  |\Mon(S/\ini(x_n, I(\ab)))|  &=& \dim_K S/(x_n, I(\ab)) \\
                     &=&  \dim_K K[H]/(t^{a_n})= a_n.
\end{eqnarray*}
Since   $|\Ap(H,a_n)|=a_n$, we conclude that the map $\varphi_\ab$ is bijective.
\end{proof}

\begin{Theorem}
\label{thm:apery-cm}
Let $\ab: a_1,\dots, a_n$ be a sequence of distinct positive integers such that $\gcd(a_1, \dots , a_n)=1$ and $a_n>a_i$ for all $i=1,\dots,n-1$. We denote $\ab'$  the dual sequence of $\ab$.
Let $H$ and $H'$ be the numerical semigroups generated by $\ab$ and $\ab'$, respectively.
The following statements are equivalent:
\begin{enumerate}
\item[{\em (a)}] the projective monomial curve $\overline{C(\ab)}$ is arithmetically Cohen-Macaulay;
\item[{\em (b)}] $\ini(x_n, I(\ab))= \ini(x_n, I(\ab'))$;
\item[{\em (c)}] $\Mon (S/\ini(x_n, I(\ab)) )= \Mon (S/\ini(x_n, I(\ab')) )$;
\item[{\em (d)}] $\deg_{\ab'} (\varphi_\ab(h)) \in \Ap(H', a_n)$ for all $h$ in $\Ap(H, a_n)$,
\end{enumerate}
where the initial ideals are taken with respect to the reverse lexicographic term order on $S$.
\end{Theorem}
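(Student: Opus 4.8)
The plan is to prove the cycle $(a)\Rightarrow(b)\Leftrightarrow(c)\Leftrightarrow(d)\Rightarrow(a)$, isolating the only serious step at the very end. Two of these links are essentially formal. The equivalence $(b)\Leftrightarrow(c)$ is immediate, since two monomial ideals in $S$ coincide precisely when their complementary sets of standard monomials coincide, and $\Mon(S/\ini(x_n,I(\ab)))$ is exactly that complement. For $(c)\Leftrightarrow(d)$ I would lean on Propositions~\ref{prop:apery} and~\ref{prop:bijection}, which identify $\Mon(S/\ini(x_n,I(\ab)))$ with $\{\varphi_\ab(h):h\in\Ap(H,a_n)\}$ and similarly for $\ab'$. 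The elementary fact driving this is the identity $\deg_{\ab'}(m)=a_n\deg(m)-\deg_\ab(m)$ for every monomial $m\in S'$, where $\deg$ is the total degree. Granting $(c)$, the monomial $\varphi_\ab(h)$ lies in $\Mon(S/\ini(x_n,I(\ab')))$, hence equals $\varphi_{\ab'}(h')$ with $h'=\deg_{\ab'}(\varphi_\ab(h))\in\Ap(H',a_n)$, which is $(d)$.

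For $(d)\Rightarrow(c)$ I would establish the inclusion $\Mon(S/\ini(x_n,I(\ab)))\subseteq\Mon(S/\ini(x_n,I(\ab')))$ and then upgrade it to equality, both sets having cardinality $a_n$ by Proposition~\ref{prop:bijection}. Fix $h\in\Ap(H,a_n)$, put $m=\varphi_\ab(h)$, $d=\deg m$, and $h'=\deg_{\ab'}(m)=a_nd-h$, which lies in $\Ap(H',a_n)$ by $(d)$; it suffices to check that $m=\varphi_{\ab'}(h')$, i.e. that $m$ is revlex-smallest of $\ab'$-degree $h'$. If some $\mu<m$ had $\deg_{\ab'}(\mu)=h'$, then $\mu\in S'$ (as $h'\in\Ap(H',a_n)$ forbids $x_n$) and, the order being graded, $\deg\mu\le d$. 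If $\deg\mu=d$ then $\deg_\ab\mu=a_nd-h'=h$, contradicting minimality of $m=\varphi_\ab(h)$; if $\deg\mu<d$ then $\deg_\ab\mu=h-a_n(d-\deg\mu)$ with $d-\deg\mu\ge1$, so $h-a_n=\deg_\ab\mu+(d-\deg\mu-1)a_n\in H$, contradicting $h\in\Ap(H,a_n)$. This is the cleanest part, and I expect no trouble with it.

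The implication $(a)\Rightarrow(b)$ I would deduce from the earlier theorems. Since $\overline{C(\ab')}\cong\overline{C(\ab)}$, it is also arithmetically Cohen--Macaulay, so Theorem~\ref{thm:main}(g) applies to both sequences, giving $\ini(x_n,I(\ab))=(x_n,\ini(I(\ab)))$ and $\ini(x_n,I(\ab'))=(x_n,\ini(I(\ab')))$. Under $(a)$ the proof of Theorem~\ref{thm:duality} yields $\ini(f_{\sigma(w)})=\ini(f_w)$ for all $f_w\in\mathcal{G}$, whence $\ini(I(\ab))=\ini(I(\ab'))$; substituting gives $(b)$.

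The hard part will be the closing implication $(b)\Rightarrow(a)$, equivalently $\neg(a)\Rightarrow\neg(c)$, and this is where I expect the real work. Arguing contrapositively, if $\overline{C(\ab)}$ is not arithmetically Cohen--Macaulay then Theorem~\ref{thm:main}(d) produces a minimal generator $x^{w^+}=\ini(f_w)$ of $\ini(I(\ab))$ divisible by $x_n$, with $f_w=x^{w^+}-x^{w^-}\in\mathcal{G}$. Two structural facts would guide the search for a monomial in the symmetric difference of the two standard-monomial sets. First, such a witness must be non-homogeneous: for a homogeneous reduced Gr\"obner basis element the graded reverse lexicographic order with $x_n$ smallest forces $x_n\nmid\ini(f_w)$, by the same argument used inside the proof of Theorem~\ref{thm:main}. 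Second, the companion monomial satisfies $x^{w^-}=x^{w^+}-f_w\in(x_n)+I(\ab)=(x_n,I(\ab))$, so $x^{w^-}\in\ini(x_n,I(\ab))$ and hence $x^{w^-}\notin\Mon(S/\ini(x_n,I(\ab)))$, while $x^{w^-}\notin\ini(I(\ab))$ since it is the tail of a reduced Gr\"obner basis element. The plan is then to show that, for a witness of minimal $\ab$-degree and after dividing off any excess power of $x_n$ so as to land in $S'$, the monomial $x^{w^-}$ nonetheless lies in $\Mon(S/\ini(x_n,I(\ab')))$, by verifying that $\deg_{\ab'}(x^{w^-})\in\Ap(H',a_n)$ and that $x^{w^-}$ is revlex-smallest in its $\ab'$-degree; here one exploits that $\ini(f_{\sigma(w)})=x^{w^-}x_n^{s}$ with $s=\sum_{i=1}^n w_i>0$, so that $x^{w^-}$ itself is not forced into $\ini(I(\ab'))$. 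This would exhibit a monomial in $\Mon(S/\ini(x_n,I(\ab')))\setminus\Mon(S/\ini(x_n,I(\ab)))$, violating $(c)$. The main obstacle is precisely this last verification: controlling the minimality and the $x_n$-multiplicity of the witness so that its companion lands cleanly in the dual standard set rather than in neither set.
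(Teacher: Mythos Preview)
Your handling of $(b)\Leftrightarrow(c)\Leftrightarrow(d)$ and of $(a)\Rightarrow(b)$ is essentially the paper's, and your $(d)\Rightarrow(c)$ is in fact sharper: the paper invokes Proposition~\ref{prop:apery} to conclude $\varphi_\ab(h)\notin\ini(x_n,I(\ab'))$, which tacitly requires $\varphi_\ab(h)=\varphi_{\ab'}(h')$, precisely the identity you take the trouble to verify.

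The divergence is at the closing implication, and there your plan and the paper's argument part ways. You propose the contrapositive $\neg(a)\Rightarrow\neg(c)$ and look for a witness $v=x^{w^-}$ with $v\in\ini(x_n,I(\ab))$ but $v\notin\ini(x_n,I(\ab'))$. The obstacle you flag is real: from $\neg(a)$ alone there is no mechanism forcing $\deg_{\ab'}(v)\in\Ap(H',a_n)$ or forcing $v$ to be revlex-minimal in its $\ab'$-degree, and your observation that $\ini(f_{\sigma(w)})=v\,x_n^d$ only keeps $v$ out of $\ini(I(\ab'))$, not out of the larger ideal $\ini(x_n,I(\ab'))$. A minimality hypothesis on $\deg_\ab$ does not obviously close this, and no ``dividing off powers of $x_n$'' is needed since $v\in S'$ already.

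The paper avoids the difficulty by arguing directly: assume $(b)$ \emph{and} $\neg(a)$, and derive a contradiction. The focus is on $u$, the $x_n$-free part of $x^{w^+}$, rather than on $v$. From $f_{\sigma(w)}=u-v\,x_n^d$ one gets $u\in(x_n,I(\ab'))$, hence $u\in\ini(x_n,I(\ab'))$; now $(b)$, used as a standing hypothesis, transports this to $u\in\ini(x_n,I(\ab))$. Since $u\notin\ini(I(\ab))$ (as $u\,x_n^c$ is a minimal generator), this forces $\deg_\ab(u)-a_n\in H$. That is not yet a contradiction, so one iterates: write $\deg_\ab(u)=h_1+\lambda_n a_n$ with $h_1\in\Ap(H,a_n)$, set $u_1=\varphi_\ab(h_1)$, observe that $u_1x_n^{\lambda_n}-u\in I(\ab)$ has initial term $u_1x_n^{\lambda_n}$, and extract a minimal generator $u_2x_n^e\in G(\ini(I(\ab)))$ with $u_2\mid u_1$; divisibility then gives $\deg_\ab(u_2)\in\Ap(H,a_n)$. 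Running the $\sigma$-argument and $(b)$ once more on $u_2$ now yields a genuine contradiction, since both alternatives---$u_2\in\ini(I(\ab))$, or $\deg_\ab(u_2)-a_n\in H$---are blocked. The moral is that keeping $(b)$ available lets you shuttle monomials between the two initial ideals; the iteration $u\to u_2$ is what manufactures a monomial whose $\ab$-degree lies in the Ap\'ery set, which is exactly what makes the second pass terminate.
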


\begin{proof}
Assume (a) holds. It follows from   Theorem \ref{thm:main} that  $\ini (x_n,I(\ab))=(x_n, \ini (I(\ab)))$ and $\ini (x_n,I(\ab'))=(x_n, \ini (I(\ab')))$.
We get from Lemma \ref{lemma:revlex} that $G(\ini (I(\ab)))= G(\ini (I^h(\ab)))= G(\ini (I^h(\ab'))) = G(\ini (I(\ab)))$, hence the statement (b) is true.

Clearly, the properties (b) and (c) are equivalent.

We now prove that (b) \iff (d). Assume that (b) holds. Let $h\in \Ap(H, a_n)$. By Proposition \ref{prop:apery} we have that the monomial $\varphi_\ab(h)$ is not in $\ini(x_n, I(\ab))$,
hence it is not in $\ini(x_n, I(\ab'))$. Using Proposition \ref{prop:bijection} we get that $\varphi_\ab(h)= \varphi_{\ab'}(h')$ for some $h'$ in $\Ap(H', a_n)$.
Hence $\deg_{\ab'}(\varphi_\ab(h)) \in \Ap(H', a_n)$, which proves (d).

Conversely, we assume that (d) holds and we consider the monomial $x^\alpha$ not in $\ini(x_n, I(\ab))$. By  Proposition \ref{prop:bijection} there exists $h$ in $\Ap(H, a_n)$ such that  $\varphi_\ab(h)=x^\alpha$.
Property (d) implies that there exists $h'$ in $\Ap(H', a_n)$ such that $h'=\deg_{\ab'}(x^\alpha)$, which by Proposition \ref{prop:apery} gives that $x^\alpha \notin \ini(x_n, I(\ab'))$.
Hence (d) \implies (b).

To finish the proof of the theorem we are left to show that (b) \implies (a). Assume  $\ini(x_n, I(\ab))= \ini(x_n, I(\ab'))$. By Theorem \ref{thm:main}, it is enough to prove
that $x_n$ does not divide any monomial in $G(\ini(I(\ab)))$.  Assume there exists a monomial $u \cdot x_n^c$ in $G(\ini(I(\ab)))$ with $u$ not divisible by $x_n$ and $c>0$.
Then $u$ is not a constant, otherwise, since $I(\ab)$ has a Gr\"obner basis of binomials, there exists $f=x_n^c- x_1^{r_1}\dots x_{n-1}^{r_{n-1}}$ in $I(\ab)$ with $x_n^c=\ini (f)$.
This implies that we have a relation $c \cdot a_n= \sum_{i=1}^{n-1} r_i a_i$ with $c \geq \sum_{i=1}^{n-1} r_i$, which is false since $a_n >a_i$ for all $i<n$.

Let $u\cdot x_n^c=\ini (f_w)$, where $w=(w_1, \dots, w_n) \in L(\ab)$. Without loss of generality we may assume $\ini(f_w)= x^{w^+}$, hence $w_n=c$.
 Set $v=x^{w^-}$ and $d=\deg (u\cdot x_n^c)-\deg v$. Then $d>0$ by the above discussion.
The sum of the components of $\sigma(w)$ equals $\sum_{i=1}^{n-1} w_i + (- \sum_{i=1}^n w_i)=-w_n=-c <0$, hence
$\deg (x^{\sigma(w)^+}) < \deg (x^{\sigma(w)^-})$ and $f_{\sigma(w)}=u-x_n^d\cdot v$.
 This gives that $$u= x_n^d\cdot v+ f_{\sigma(w)} \in (x_n, I(\ab')),$$
and also that $u\in \ini (x_n, I(\ab'))$, which  by our  hypothesis (b) implies that $u\in \ini (x_n, I(\ab))$. Since the ideal $I(\ab)$ is  $\ab$-homogeneous we can write
$$
u= \ini(x_nf+f_1 f_{z_1}+ \dots +f_q f_{z_q}),
$$
where  $z_1, \dots, z_q \in L(\ab)$,  and $f, f_1, \dots, f_q$ are $\ab$-homogeneous with $\deg_\ab (x_n f)=\deg_\ab (f_1 f_{z_1})=\dots = \deg_\ab (f_q f_{z_q})$.
We see that $f\neq 0$, otherwise $u\in \ini(I(\ab))$, which contradicts the fact that $u\cdot x_n^c$ is a minimal monomial generator for $\ini (I(\ab))$.

Let $h=\deg_\ab(u)$. Since $f\neq 0$ we get that $h-a_n \in H$.  We may write $h=h_1+\lambda_n a_n$ with  $\lambda_n$ a maximal positive integer and $h_1\in H$, i.e.  $h_1 \in \Ap(H, a_n)$.
Let $u_1=\varphi_\ab(h_1)= x_1^{\lambda_1}\dots x_{n-1}^{\lambda_{n-1}}$. Then the binomial $f_1=u_1x_n^{\lambda_n}-u$ is in $I(\ab)$.
As $u\cdot x_n^c \in G(\ini(I(\ab)))$ with $c>1$, we get that
$u \notin \ini(I(\ab))$, hence $\ini(f_1)=u_1\cdot x_n^{\lambda_n} \in \ini(I(\ab))$.

By Proposition \ref{prop:apery}, $u_1\notin \ini(x_n, I(\ab))$, hence $u_1 \notin \ini(I(\ab))$, as well.
This implies that $u_1\cdot x_n^{\lambda_n}$ is divisible by a monomial $u_2  x_n^e \in G(\ini(I(\ab))$ with $x_n$ and $u_2$ coprime, $e>0$. Therefore $u_2$ divides $u_1$,
hence $\deg_\ab(u_2)+h_2=\deg_\ab(u_1)$ for some positive $h_2$ in $H$. This gives $\deg_\ab(u_2) \in \Ap(H,a_n)$.

We may write $u_2\cdot x_n^e=\ini(f_{\widetilde{w}})$ with $\widetilde{w} \in L(\ab)$, and arguing as before we get that
$u_2= x_n^{d'}\cdot v_1+ f_{\sigma({\widetilde{w}})}\in \ini(x_n, I(\ab))$ for some positive $d'$. Thus
$$
u_2= \ini(x_nf'+f_1' f_{z_1'}+ \dots +f_\ell' f_{z_\ell'}),
$$
where  $z_1', \dots, z_\ell' \in L(\ab)$,  and $f', f_1', \dots, f_q'$ are $\ab$-homogeneous with $\deg_\ab (x_n f')=\deg_\ab (f_1' f_{z_1'})=\dots = \deg_\ab (f_\ell' f_{z_\ell'})$.
If $f'=0$, then $u_2\in \ini(I(\ab))$, which is false since $u_2\cdot x_n^e \in G(\ini(I(\ab)))$. On the other hand, $f'\neq 0$ implies that $a_n+ \deg_\ab(f')=\deg_\ab(u_2)$, hence
$\deg_\ab(u_2) \notin \Ap(H, a_n)$, which is also false.
Therefore $x_n$ does not divide any monomial in $G(\ini(I(\ab)))$. This concludes the proof of the implication (b) \implies (a) and of the theorem.
\end{proof}


Let $\Ap(H, a_n)=\{0, \nu_1, \ldots, \nu_{a_n-1}\}$. We may assume that $\nu_i\equiv i \mod a_n$ for all $i$.  For each $\nu_i$,  let $\mu_i \in H'$ be the smallest element such that $(\nu_i, \mu_i)$
 is in the semigroup generated by the columns of the matrix $\mathcal{A}$ from \eqref{eq:A}.

 Note that  $\mu_i\equiv -i\mod a_n$ for all $i$. Cavalieri and Niesi \cite{CN} call $\Ap(H, a_n)$ {\em good}, if  $\{0,\mu_1, \ldots, \mu_{a_{n}-1}\}=\Ap(H', a_n)$.

\medskip
As a consequence of Theorem~\ref{thm:apery-cm} we obtain

\begin{Corollary}(Cavaliere-Niesi, \cite[Theorem 4.6]{CN})
\label{cor:cn}
The  projective monomial curve $\overline{C(\ab)}$ is arithmetically Cohen-Macaulay if and only if $\Ap(H, a_n)$ is good.
\end{Corollary}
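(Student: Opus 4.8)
The plan is to identify the elements $\mu_i$ from the definition of a good Ap\'ery set with the $\ab'$-degrees $\deg_{\ab'}(\varphi_\ab(\nu_i))$ that govern condition (d) of Theorem~\ref{thm:apery-cm}, and then to read off the corollary from the equivalence (a)~\iff~(d) proved there.

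First I would unwind what it means for $(\nu_i,\mu)$ to lie in the semigroup generated by the columns of $\mathcal{A}$. Attaching nonnegative coefficients $c_0,c_1,\dots,c_n$ to the columns indexed by $x_0,x_1,\dots,x_n$, the first coordinate is $\sum_{j=1}^n c_j a_j\in H$ and the second is $c_0 a_n+\sum_{j=1}^{n-1}c_j(a_n-a_j)\in H'$. Since $\nu_i\in\Ap(H,a_n)$ forces $\nu_i-a_n\notin H$, no representation of $\nu_i$ may use the generator $a_n$; hence $c_n=0$, every representation of $\nu_i$ uses only $a_1,\dots,a_{n-1}$, and the second coordinate rewrites as $\bigl(c_0+\sum_{j=1}^{n-1}c_j\bigr)a_n-\nu_i$. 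Minimizing the second coordinate forces $c_0=0$ and $\sum_{j=1}^{n-1}c_j$ as small as possible, so that $\mu_i=D_i\,a_n-\nu_i$, where $D_i$ is the least total degree of a monomial in $S'=K[x_1,\dots,x_{n-1}]$ of $\ab$-degree $\nu_i$.

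The key step is to recognize that $D_i$ is exactly the total degree of $\varphi_\ab(\nu_i)$. Indeed, because $\nu_i\in\Ap(H,a_n)$, every monomial of $\ab$-degree $\nu_i$ already lies in $S'$, and for a reverse lexicographic order a monomial of smaller total degree is smaller; hence the revlex-smallest monomial $\varphi_\ab(\nu_i)$ attains the minimal total degree $D_i$. Writing $\varphi_\ab(\nu_i)=x_1^{\alpha_1}\cdots x_{n-1}^{\alpha_{n-1}}$ with $\sum_j\alpha_j=D_i$, we obtain $\deg_{\ab'}(\varphi_\ab(\nu_i))=\sum_j\alpha_j(a_n-a_j)=D_i a_n-\nu_i=\mu_i$. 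Since moreover $\varphi_\ab(0)=1$ gives $\deg_{\ab'}(1)=0$, we conclude that
$$
\{0,\mu_1,\dots,\mu_{a_n-1}\}=\{\deg_{\ab'}(\varphi_\ab(h)):h\in\Ap(H,a_n)\}.
$$

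Finally I would close the argument by counting. As $\mu_i\equiv -i\pmod{a_n}$, the $\mu_i$ are pairwise incongruent modulo $a_n$, so the set on the left has exactly $a_n$ elements; and $\Ap(H',a_n)$ also has $a_n$ elements. Now condition (d) of Theorem~\ref{thm:apery-cm} asserts precisely that $\{\deg_{\ab'}(\varphi_\ab(h)):h\in\Ap(H,a_n)\}\subseteq\Ap(H',a_n)$, and an inclusion of an $a_n$-element set into an $a_n$-element set is an equality. Thus (d) holds if and only if $\{0,\mu_1,\dots,\mu_{a_n-1}\}=\Ap(H',a_n)$, that is, if and only if $\Ap(H,a_n)$ is good. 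Combined with the equivalence (a)~\iff~(d) of Theorem~\ref{thm:apery-cm}, this gives the corollary. The only delicate point is the identification $\mu_i=\deg_{\ab'}(\varphi_\ab(\nu_i))$, which rests on translating ``smallest second coordinate'' into ``minimal total degree in $S'$'' and matching that with the revlex-minimality defining $\varphi_\ab$; the remainder is bookkeeping with cardinalities and residues of Ap\'ery sets.
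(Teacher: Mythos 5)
Your proof is correct and takes essentially the same route as the paper: both unwind the definition of $\mu_i$ to get $\mu_i=(\sum_j r_j)a_n-\nu_i$ with minimal total degree $\sum_j r_j$, identify this with $\deg_{\ab'}(\varphi_\ab(\nu_i))$ via the fact that revlex-minimality forces degree-minimality, and then invoke the equivalence of (a) and (d) in Theorem~\ref{thm:apery-cm}. Your explicit counting step (using $\mu_i\equiv -i \pmod{a_n}$ and $|\Ap(H',a_n)|=a_n$ to upgrade the inclusion in condition (d) to the equality defining goodness) is left implicit in the paper, but it is the same argument, just spelled out.
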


\begin{proof}
Let $\nu_i=\sum_{j=1}^{n-1}r_ja_j$ with integer coefficients $r_j\geq 0$ and $\sum_{j=1}^{n-1}r_j(a_n-a_j)$  minimal. Then $\mu_i=\sum_{j=1}^{n-1}r_j(a_n-a_j)$. Thus $\mu_i=(\sum_{j=1}^{n-1}r_j)a_n-\nu_i$ with  $\sum_{j=1}^{n-1}r_j$ minimal and $\sum_{j=1}^{n-1}r_ja_j=\nu_i$.

This shows that if $\varphi_{\ab}(\nu_i)=\prod_{j=1}^{n-1}x_j^{s_j}$, then
$\deg_{\ab'} (\varphi_\ab(\nu_i))= \sum_{i=1}^{n-1}s_j(a_n-a_j)=(\sum_{j=1}^{n-1} s_j) a_n-\sum_{j=1}^{n-1} s_j a_ j=  (\sum_{j=1}^{n-1} r_j)a_n-\nu_i=\mu_i$.
Hence Theorem~\ref{thm:apery-cm}(a)\iff (d) yields the desired conclusion.
\end{proof}

\section{A bound  for the number of generators of $\ini (I(\ab))$, when $\overline{C(\ab)}$ is arithmetically Cohen-Macaulay}
\label{sec:bound}

In this section we show by examples that the number of generators of $\ini (I(\ab))$ may be arbitrarily large, already  if $\ab$ has only $3$ elements.

\begin{Proposition}
\label{prop:big-ini}
For  the  integer $h\geq 2$, let $\ab= 4, 6h+1, 6h+7 $. Then $\mu(\ini(I(\ab)))= h+2$, where the initial ideal is computed with respect to the reverse lexicographic monomial order with $x_1>x_2>x_3$.
\end{Proposition}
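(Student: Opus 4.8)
The plan is to compute the reduced Gr\"obner basis of $I(\ab)$ explicitly and then read off the minimal generators of its initial ideal. Write $a=4$, $b=6h+1$, $c=6h+7$; a short check shows that $a,b,c$ is a minimal generating set of $H$ (neither $b$ nor $c$ lies in the semigroup generated by the other two). I would first locate the three critical binomials of $H$, namely
$$ g_1=x_1^{3h+2}-x_2x_3,\qquad g_2=x_1^{3h-1}x_3-x_2^3,\qquad g_3=x_1^3x_2^2-x_3^2, $$
each of which lies in $I(\ab)$ because its two terms have equal $\ab$-degree (e.g.\ $\deg_\ab(x_1^{3h+2})=\deg_\ab(x_2x_3)=12h+8$). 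These arise from the minimal multiples $c_1a=3h+2$, $c_2b=3$, $c_3c=2$ being in the semigroup generated by the remaining generators, and they satisfy the Herzog gluing identities $3h+2=(3h-1)+3$, $3=1+2$, $2=1+1$; hence by Herzog's structure theorem for $3$-generated numerical semigroups (\cite{He-semi}) the semigroup is not a complete intersection and $I(\ab)=(g_1,g_2,g_3)$.

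Next I would run Buchberger's algorithm. Under the chosen revlex order with $x_1>x_2>x_3$ the leading terms are $\ini(g_1)=x_1^{3h+2}$, $\ini(g_3)=x_1^3x_2^2$, and, using $h\ge2$, $\ini(g_2)=x_1^{3h-1}x_3$. The key computation is the $S$-polynomial of $g_3$ against the binomials
$$ G_k=x_1^{3(h-k)+2}x_3^{2k-1}-x_2^{2k+1},\qquad k=1,\dots,h, $$
where $G_1=g_2$. A direct calculation gives $S(G_k,g_3)=G_{k+1}$ for $1\le k\le h-1$, so the whole family is produced recursively, and for each $k<h$ one checks $\ini(G_{k+1})$ is new. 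For $k<h$ the leading term of $G_k$ is the monomial $x_1^{3(h-k)+2}x_3^{2k-1}$, since its total degree $3h-k+1$ exceeds $2k+1$; at $k=h$ the two monomials of $G_h=x_1^2x_3^{2h-1}-x_2^{2h+1}$ have the same total degree $2h+1$, and the revlex tie-break (the smallest variable $x_3$ carries the larger exponent in $x_1^2x_3^{2h-1}$) forces $\ini(G_h)=x_2^{2h+1}$. This flip, which requires $h\ge2$, is exactly what stops the recursion.

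It then remains to verify that every remaining $S$-polynomial reduces to $0$. The pairs with coprime leading terms (namely $g_1$, or any $G_k$ with $k<h$, against $G_h$) vanish by Buchberger's first criterion. For the others the tails collapse: $S(g_1,g_3)=x_3\,G_1$, whereas both $S(g_1,G_k)$ and $S(G_k,G_{k'})$ turn out to equal $x_2^{e}\,((x_1^3x_2^2)^m-(x_3^2)^m)$ for suitable $e\ge1$ and $m\ge1$, which is divisible by $g_3=x_1^3x_2^2-x_3^2$ and reduces to $0$ by repeatedly cancelling the leading power of $x_1^3x_2^2$; finally $S(g_3,G_h)$ reduces to $0$ via $G_{h-1}$. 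Hence $\mathcal{G}=\{g_1,g_3,G_1,\dots,G_h\}$ is a (reduced) Gr\"obner basis, and $\ini(I(\ab))=(x_1^{3h+2},\,x_1^3x_2^2,\,x_2^{2h+1},\,x_1^{3(h-k)+2}x_3^{2k-1}:1\le k\le h-1)$.

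Finally I would confirm that these $h+2$ monomials are pairwise non-divisible, so that they are precisely the minimal generators: the $h-1$ monomials $x_1^{3(h-k)+2}x_3^{2k-1}$ form a strict staircase (as $k$ grows the $x_1$-exponent drops by $3$ and the $x_3$-exponent rises by $2$), none of them is comparable to the $x_3$-free monomials $x_1^{3h+2}$, $x_1^3x_2^2$ or to the pure power $x_2^{2h+1}$, and $x_1^{3h+2}$, $x_1^3x_2^2$, $x_2^{2h+1}$ are mutually non-divisible. Therefore all $h+2$ leading terms belong to $G(\ini(I(\ab)))$ and $\mu(\ini(I(\ab)))=h+2$. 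The main obstacle is the complete Buchberger verification: pinning down the leading term of each $G_k$ (in particular the degenerate tie at $k=h$ that is responsible for the hypothesis $h\ge2$) and checking that the identity $(x_1^3x_2^2)^m-(x_3^2)^m\equiv 0 \pmod{g_3}$ really carries the non-recursive $S$-polynomials to zero, so that the displayed list of leading terms is both complete and irredundant.
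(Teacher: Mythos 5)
Your proposal is correct and follows essentially the same route as the paper's proof: the paper likewise uses Herzog's structure theorem to get the three generators $f_1=x_1^{3h+2}-x_2x_3$, $f_3=x_1^3x_2^2-x_3^2$, $g_1=x_1^{3h-1}x_3-x_2^3$, builds the same recursive family $g_i=x_1^{3(h-i)+2}x_3^{2i-1}-x_2^{2i+1}$ via $g_{i+1}=S(g_i,f_3)$, and verifies Buchberger's criterion with exactly your case split (coprime pairs involving $g_h$, pairs collapsing to multiples of $f_3$, and the recursion/flip at $i=h$). Your concluding pairwise non-divisibility check of the $h+2$ leading monomials is a slightly more explicit version of the paper's final "reduced Gr\"obner basis" observation, but the argument is the same.
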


\begin{proof}
We first find $I(\ab)$ using the method from \cite{He-semi}. For $1\leq i \leq 3$ we let $c_i$ be the smallest prositive integer such that
\begin{equation}
\label{eq:3semi}
c_i a_i= r_{ij}a_j+r_{ik}a_k,
\end{equation}
with $r_{ij}, r_{ik}$ nonnegative integers  and $\{i,j,k\}= \{1,2,3\}$.
Since $a_1, a_2, a_3$ are pairwise coprime, it is known from \cite{He-semi} that the $r_{ij}$'s are unique and positive, and  $c_i=r_{ji}+r_{ki}$ for all $\{i,j,k\}= \{1,2,3\}$.

From the equations $(3h+2) a_1=a_2+a_3$ and $2a_3=3a_1+ 2 a_2$ we find $c_1=3h+2$, $c_3=2$ and the corresponding $r_{ij}$'s from \eqref{eq:3semi}.
Hence $c_2=3$ and $3a_2=(3h-1)a_1+a_3$ is the corresponding equation from \eqref{eq:3semi}.
According to \cite{He-semi}, the ideal $I(\ab)$ is minimally generated by $f_1= x_1^{3h+2}-x_2 x_3$, $f_3=x_1^3x_2^2-x_3^2$ and $g_1=x_1^{3h-1}x_3-x_2^3$.

We introduce recursively the polynomials $g_{i+1}=S(g_i, f_3)$ for $1\leq i \leq h-1$. It follows easily by induction  that
$g_i=x_1^{3(h-i)+2}x_3^{2i-1}-x_2^{2i+1}$, for $1\leq i \leq h$.
We claim that
\begin{equation}
\label{eq:g}
\mathcal{G}=\{f_1, g_1, \dots, g_h, f_3\}
\end{equation}
is the reduced Gr\"obner basis of $I(\ab)$.

To prove that $\mathcal{G}$ is a Gr\"obner basis  we need to check that the $S$-pairs of elements in $\mathcal{G}$ reduce to zero with respect to $\mathcal{G}$, see \cite[Theorem 2.14]{EH}.
Here are the relevant computations.

$S(f_1, f_3)=x_2^2-x_1^{3h-1}f_3=x_3(x_2^3-x_1^{3h-1}x_3)= -x_3 g_1  \stackrel{\mathcal{G}}{\rightarrow} 0$.

 $S(g_h, f_1) \stackrel{\mathcal{G}}{\rightarrow} 0  \text{ since } \gcd(\ini(g_h)), \ini(f_1))=1$.

$S(g_h,f_3)=x_1^3g_h-x_2^{2h-1}f_3=x_3^2 g_{h-1} \stackrel{\mathcal{G}}{\rightarrow} 0$. \\
For  $1\leq i \leq h-1:$ \\
\indent $S(g_i, f_1) =x_1^{3i}g_i-x_3^{2i-1}f_1=x_2(x_3^{2i}-x_1^{3i}x_2^{2i})=x_2\cdot (x_3^2-x_1^3x_2^2)\cdot (\dots) =
 x_2\cdot f_3 \cdot (\dots)  \stackrel{\mathcal{G}}{\rightarrow} 0,  \text{ and}$

$S(g_i, f_3)=g_{i+1} \stackrel{\mathcal{G}}{\rightarrow} 0.$ \\
For $1\leq i< j<h$: \\
\indent $S(g_i, g_j) = x_3^{2(j-i)}g_i-x_1^{3(j-i)} g_j  = x_1^{3(j-i)}x_2^{2j+1}-x_3^{2(j-i)}x_2^{2i+1}
= x_2^{2(i+1)}(x_1^{3(j-i)}x_2^{2(j-i)}-x_3^{2(j-i)}) =x_2^{2(i+1)} \cdot f_3 \cdot (\cdots) \stackrel{\mathcal{G}}{\rightarrow} 0.$

 For $1\leq i <h$ we have that $S(g_i, g_h) \stackrel{\mathcal{G}}{\rightarrow} 0$,  since $\gcd(\ini(g_i), \ini(g_h))=1$.

By inspecting the binomials in $\mathcal{G}$ it follows that they are in fact the  reduced Gr\"obner basis for $I(\ab)$.
This shows that $\mu(\ini(I(\ab))= |\mathcal{G}|=h+2$.
\end{proof}

\begin{Remark}
\label{rem:smth}
{\em
The  sequence dual to the one from Proposition \ref{prop:big-ini} is $$\ab'=6h+3, 6, 6h+7.$$  It is easy to check (using \cite{He-semi}) that the corresponding toric ideal
is a complete intersection  $I(\ab')=(x_2^{2h+1}-x_1^2, x_1^3x_2^2-x_3^3)$.
In particular, this shows that  the image through the involution $\sigma$ of a  minimal set of binomial generators for $I(\ab)$  may no longer be a minimal generating system for  $I(\ab')$.

Arguing as in the proof of Proposition~\ref{prop:big-ini},  it is routine to verify that for the reduced Gr\"obner basis $\mathcal{G}$ in \eqref{eq:g}, the set
$\{ f_{\sigma(w)}: f_w \in \mathcal{G} \}$ is a minimal Gr\"obner basis for $I(\ab')$. The latter  set of binomials is fully interreduced, yet it is not the reduced Gr\"obner
basis for $I(\ab')$ since the leading coefficients of the binomials coming from $g_1, \dots, g_{h-1}$ equal $-1$.

From Theorem \ref{thm:duality} we infer that  $\overline{C(\ab)}$ is not arithmetically Cohen--Macaulay.
This can also be seen from the fact that  $\ini(g_1)=x_1^{3h-1}x_3 \in G(\ini(I(\ab)))$  (as $h>1$) and using Theorem \ref{thm:main}.
}
\end{Remark}

\medskip

If $\overline{C(\ab)}$ is arithmetically Cohen-Macaulay, we give an explicit  bound for
$\mu(\ini_<(I(\ab)))$ depending on $a_n$, the largest element of the sequence $\ab$. To prove this we first show

\begin{Lemma}
\label{bound}
Let $n\geq 2$ and  $I\subset S=K[x_1,\ldots,x_n]$ be a graded ideal  with $\mm^k\subset I$, where $\mm=(x_1,\ldots,x_n)$.
Let $<$ be a monomial order on $S$. Then $\mu(I)\leq \mu(\ini_<(I))\leq \mu(\mm^k)={n+k-1\choose n-1}$, and $\mu(I)={n+k-1\choose n-1}$ if and only if $I=\mm^k$.
\end{Lemma}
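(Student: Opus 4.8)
The plan is to establish the three assertions in order: the inequality $\mu(I)\le\mu(\ini_<(I))$, the bound $\mu(\ini_<(I))\le\binom{n+k-1}{n-1}$, and finally the equality case. The first inequality is the standard Gr\"obner-basis fact: the leading terms of a reduced Gr\"obner basis $\mathcal{G}$ of $I$ with respect to $<$ are exactly the minimal monomial generators of $\ini_<(I)$, so $|\mathcal{G}|=\mu(\ini_<(I))$; since $\mathcal{G}$ generates $I$, we get $\mu(I)\le|\mathcal{G}|=\mu(\ini_<(I))$.

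For the bound I would first pass to monomial ideals. Since $\mm^k$ is a monomial ideal, $\ini_<(\mm^k)=\mm^k$, and $\mm^k\subseteq I$ forces $\mm^k\subseteq J:=\ini_<(I)$. Thus $J$ is a monomial ideal containing $\mm^k$, so every monomial of degree $\ge k$ lies in $J$; consequently every element of $G(J)$ has degree $\le k$, since a generator of degree $>k$ would be a proper multiple of a degree-$k$ monomial already in $J$, contradicting minimality. I then define $\phi\colon G(J)\to\{\text{monomials of degree }k\}$ by $\phi(w)=w\,x_n^{k-\deg w}$. This map is injective: if $\phi(w)=\phi(w')$, then $w$ and $w'$ share all exponents in $x_1,\dots,x_{n-1}$ and differ only in the power of $x_n$, so one divides the other; distinct minimal generators are incomparable under divisibility, hence $w=w'$. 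As the target set has $\binom{n+k-1}{n-1}$ elements (the number of degree-$k$ monomials, which also equals $\mu(\mm^k)$), this yields $\mu(J)=|G(J)|\le\binom{n+k-1}{n-1}$.

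The equality case carries the real content. Assuming $\mu(I)=\binom{n+k-1}{n-1}$, the chain of inequalities collapses, so $\mu(J)=\binom{n+k-1}{n-1}$ and $\phi$ is a \emph{bijection}. The key device is to reparametrize $\phi$ through the $x_n$-free part: writing $w=w_{<n}\cdot x_n^{c}$ with $w_{<n}\in K[x_1,\dots,x_{n-1}]$, a direct computation gives $\phi(w)=w_{<n}\,x_n^{k-\deg w_{<n}}$, so $\phi(w)$ depends only on $w_{<n}$. Since the degree-$k$ monomials correspond bijectively to the monomials $v\in K[x_1,\dots,x_{n-1}]$ of degree $\le k$ via $v\mapsto v\,x_n^{k-\deg v}$, surjectivity of $\phi$ is equivalent to the statement that for every such $v$ there is a minimal generator $w$ with $w_{<n}=v$. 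Now suppose, for contradiction, that some $w^{(0)}\in G(J)$ has degree $d<k$; set $v=w^{(0)}_{<n}$ and $c_0=v_n(w^{(0)})$, so $\deg v=d-c_0$. For $0\le i\le k-\deg v$ surjectivity supplies a minimal generator $w^{(i)}$ with $w^{(i)}_{<n}=v\,x_1^{\,i}$; write $c_i$ for its $x_n$-exponent. Comparing $w^{(i)}$ with $w^{(i+1)}=v\,x_1^{\,i+1}x_n^{c_{i+1}}$: if $c_{i+1}\ge c_i$, then $w^{(i)}x_1$ divides $w^{(i+1)}$, exhibiting $w^{(i+1)}$ as a (proper or equal) multiple of the element $w^{(i)}x_1\in J$ and contradicting minimality; hence $c_{i+1}<c_i$. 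Therefore $c_0>c_1>\dots>c_{k-\deg v}\ge 0$ is a strictly decreasing sequence of $k-\deg v+1$ nonnegative integers, which forces $c_0\ge k-\deg v=k-d+c_0$, i.e.\ $d\ge k$ --- contradicting $d<k$.

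This shows every minimal generator of $J$ has degree $k$, which forces $J=\mm^k$. Finally, since $\mm^k\subseteq I$ and $\ini_<(I)=J=\mm^k=\ini_<(\mm^k)$, the ideals $\mm^k$ and $I$ have the same Hilbert function, whence the containment $\mm^k\subseteq I$ is an equality $I=\mm^k$; the converse implication is immediate. I expect the main obstacle to be exactly this equality case: the reparametrization of $\phi$ by its $x_n$-free part is what converts bijectivity into the existence of generators with prescribed $x_1,\dots,x_{n-1}$-parts, and it is this that powers the descent on $x_n$-exponents; without it, the surjectivity of $\phi$ is awkward to exploit directly.
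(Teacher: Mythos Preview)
Your proof is correct and takes a genuinely different route from the paper's. The paper reduces, as you do, to the monomial case $\mm^k\subseteq J$, but then argues by induction on $k-a$, where $a$ is the least degree of a generator of $J$: replacing $J$ by $J'=\mm J_a+J_{\ge a+1}$ strictly increases the number of generators (because $|G(J)_a|<|G(\mm J_a)_{a+1}|$, using $n\ge 2$), so if $a<k$ one gets the strict inequality $\mu(J)<\mu(J')\le\mu(\mm^k)$, which simultaneously yields the bound and the equality characterization. Your approach instead produces an explicit injection $\phi\colon G(J)\to\Mon(S)_k$, $w\mapsto w\,x_n^{k-\deg w}$, to obtain the bound, and then analyzes what bijectivity of $\phi$ forces: the reparametrization $\phi(w)=w_{<n}\,x_n^{k-\deg w_{<n}}$ turns surjectivity into the existence of generators with arbitrary prescribed $x_n$-free part, and the descent $c_0>c_1>\cdots\ge 0$ on $x_n$-exponents along the chain $v,\,vx_1,\,vx_1^2,\ldots$ then rules out generators of degree $<k$. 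The paper's argument is shorter and dispatches the bound and the equality case in a single inductive stroke; yours is more explicit and combinatorial, giving a concrete matching between $G(J)$ and the degree-$k$ monomials rather than an existence argument. Both ultimately conclude $\ini_<(I)=\mm^k$ and then pass back to $I=\mm^k$ via the Hilbert function, as you do.
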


\begin{proof}
It suffices to show
that for a monomial ideal $J\subset S$ with $\mm^k\subset J$, one has $\mu(J)\leq \mu(\mm^k)$, and $\mu(J)=\mu(\mm^k)$, if and only if $J=\mm^k$.
 We prove this  by induction on $k-a$, where  $a$ is the least degree of a monomial generator of $J$.
If $a=k$, the assertion is trivial. Suppose now that $a<k$.
We denote by $G(J)$ the unique minimal set of monomial  generators of $J$, and set  $G(J)_j=\{u\in G(J)\:\; \deg u=j\}$ for all $j$,
and  let $J'=\mm J_a+J_{\geq a+1}$, where $J_{\geq a+1}=(u\in J\:\; \deg u\geq a+1)$.
Then $G(J')_j=0$ for $j<a+1$ and $\mm^k\subset J'$.
Therefore, by our induction hypothesis, we have $\mu(J')\leq \mu(\mm^k)$. On the other hand, $G(J')_{a+1}$ is
the disjoint union of $G(\mm J)_{a+1}$ and $G(J)_{a+1}$. Furthermore, $G(J')_j=G(J)_j$ for $j>a+1$.
Hence,  since  $|G(J)_a|<|G(\mm J)_{a+1}|$, it follows that $\mu(J)=|G(J)|<|G(J')|=\mu(J')\leq \mu(\mm^k)$, as desired.
\end{proof}

\begin{Proposition}
\label{initialbound}
Suppose that the monomial curve $\overline{C(\ab)}$ is arithmetically Cohen--Macaulay. Then $\mu(\ini(I(\ab))\leq  {a_n\choose n-2}$.
\end{Proposition}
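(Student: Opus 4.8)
The plan is to reduce everything to an Artinian monomial ideal in the subring $S'=K[x_1,\dots,x_{n-1}]$ and then apply Lemma~\ref{bound} there, with $<$ the reverse lexicographic order for which $x_n$ is smallest. Since $\overline{C(\ab)}$ is arithmetically Cohen--Macaulay, Theorem~\ref{thm:main}(d) guarantees that $x_n$ divides no element of $G(\ini(I(\ab)))$, so every minimal generator of $\ini(I(\ab))$ lies in $S'$; I would let $J\subseteq S'$ be the ideal they generate, so that $\mu(\ini(I(\ab)))=\mu(J)$. Next I would record, using Theorem~\ref{thm:main}(g), that $\ini(x_n,I(\ab))=(x_n,\ini(I(\ab)))$, whence $S/\ini(x_n,I(\ab))\cong S'/J$. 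Combining Macaulay's theorem with the length computation in the proof of Proposition~\ref{prop:bijection} gives $\dim_K S'/J=\dim_K S/(x_n,I(\ab))=a_n$. Thus $J$ is Artinian of colength $a_n$ in the $(n-1)$-variable ring $S'$, and by Proposition~\ref{prop:bijection} its standard monomials are exactly the $a_n$ monomials $\varphi_\ab(h)$, $h\in\Ap(H,a_n)$.

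The crux of the argument is to bound the largest total degree $D$ of a standard monomial of $S'/J$, aiming for the containment $\mm'^{\,a_n-n+2}\subseteq J$, where $\mm'=(x_1,\dots,x_{n-1})$. The key observation I would isolate is that each variable $x_1,\dots,x_{n-1}$ is a standard monomial, i.e.\ $x_i\notin\ini(I(\ab))$. Indeed, were $x_i\in\ini(I(\ab))$, then (as $x_i$ has degree one) there would be an $\ab$-homogeneous $g\in I(\ab)$ with $\ini(g)=x_i$; every other monomial of $g$ has $\ab$-degree $a_i$, but since $<$ is a \emph{graded} order it has total degree at most one, forcing it to be a constant (impossible, as $a_i>0$) or another variable of the same $\ab$-degree (impossible, since the $a_j$ are pairwise distinct). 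Hence $g$ would be a scalar multiple of $x_i$, giving $x_i\in I(\ab)$, which is absurd because $I(\ab)$ is prime and $\phi(x_i)=t^{a_i}\neq0$.

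Granting this, I would pick a standard monomial $m$ of top degree $D$ and use that standard monomials are closed under division: a divisor chain $1=m_0\mid m_1\mid\cdots\mid m_D=m$ with $\deg m_i=i$ yields $D+1$ standard monomials, exactly one of which, $m_1$, is a variable. The remaining $n-2$ variables are standard monomials distinct from every $m_i$ (they have degree one, while $\deg m_i=i$), so the total number of standard monomials is at least $(D+1)+(n-2)=D+n-1$. Therefore $a_n\geq D+n-1$, i.e.\ $D\leq a_n-n+1$, so every monomial of degree $a_n-n+2$ lies in $J$ and $\mm'^{\,a_n-n+2}\subseteq J$. Applying Lemma~\ref{bound} to $J\subseteq S'$ (a ring in $n-1$ variables) with $k=a_n-n+2$ then gives
$$\mu(\ini(I(\ab)))=\mu(J)\leq\binom{(n-1)+k-1}{(n-1)-1}=\binom{a_n}{n-2},$$
which is the claim; the case $n=2$ is immediate since there $S'=K[x_1]$ and $J=(x_1^{a_n})$, so $\mu=1=\binom{a_n}{0}$.

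I expect the main obstacle to be the degree estimate $D\le a_n-n+1$ rather than the final invocation of Lemma~\ref{bound}. Colength alone only yields the weaker bound $D\le a_n-1$ (and indeed $\mm'^{a_n}\subseteq J$), which produces $\binom{a_n+n-2}{n-2}$ and is far too large; it is precisely the observation that all $n-1$ variables survive as standard monomials---a feature forced by using a graded term order---that supplies the extra $n-2$ standard monomials and lowers the exponent of $\mm'$ from $a_n$ to $a_n-n+2$. Verifying that this observation is legitimate, and that the colength of $J$ is exactly $a_n$, are the two points I would be most careful about.
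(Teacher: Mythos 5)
Your proof is correct and takes essentially the same route as the paper's: reduce to the Artinian monomial ideal $J\subset K[x_1,\dots,x_{n-1}]$ of colength $a_n$ (via Theorem~\ref{thm:main} and the length computation), bound the power of $\mm'=(x_1,\dots,x_{n-1})$ contained in it by $a_n-n+2$, and invoke Lemma~\ref{bound}. Your divisor-chain count is a repackaging of the paper's degree-by-degree Hilbert function estimate $a_n\geq 1+(n-1)+(k-2)$; the one genuine addition is that you explicitly verify that no variable $x_i$ lies in $\ini(I(\ab))$, a fact the paper's ``$1+(n-1)+\cdots$'' count uses silently.
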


\begin{proof}
As before we assume that $\ab=a_1,\ldots,a_n$ with $a_n>a_i$ for all $i$, and we let  $H$ be the numerical semigroup generated by $\ab$.
Then $I(\ab)\subset S=K[x_1,\ldots,x_n]$ and  $G(\ini_<(I(\ab))\subset \bar{S}=K[x_1,\ldots,x_{n-1}]$, by Theorem~\ref{thm:main}. Therefore,
\begin{eqnarray*}
\length(\bar{S}/\ini_<(I(\ab))&=& \length(S/(x_n,\ini_<(I(\ab))))=\length(S/(x_n,I(\ab)))=\\
\length(K[H]/(t^{a_n}))&=&a_n.\nonumber
\end{eqnarray*}

Let $k$ be the smallest number such that $(x_1,\ldots,x_{n-1})^k\subset \ini_<(I(\ab))$. Then
\begin{eqnarray*}
a_n&=& \sum_{j=0}^{k-1}\dim_K (\bar{S}/\ini_<(I(\ab))_j=  1+(n-1) +\sum_{j=2}^{k-1}\dim_K (\bar{S}/\ini_<(I(\ab))_j\\
&\geq &1+(n-1)+ (k-2)=(n-2)+k.
\end{eqnarray*}
Thus, $k\leq  a_n-(n-1)+1$, and hence by Lemma~\ref{bound} we get $\mu(\ini_<(I(\ab)))\leq {(n-1)+k-1\choose n-2}\leq {a_n\choose n-2}$.
\end{proof}

\section{Applications}
\label{sec:applications}
In this section we use the criteria in Theorem \ref{thm:main} to test the arithmetically Cohen-Macaulay property for two families of projective monomial curves.

\subsection{Bresinsky semigroups}
 In  \cite{Bresinsky} Bresinsky introduced the  semigroup
\begin{equation*}
B_h=\langle  (2h-1)2h,  (2h-1)(2h+1), 2h(2h+1), 2h(2h+1)+ 2h-1 \rangle,
\end{equation*}
where $h\geq 2$. He showed that the toric ideal $I_{B_h} \subset S=K[x,y,z,t]$ is minimally generated by more than $2h$ binomials.
Based on that, in \cite[Section 3.3]{HeS} it is proved  that
$$
 \mathcal{F}=\{xt-yz\} \cup \{z^{i-1}t^{2h-i}-x^{i+1}y^{2h-i}:1\leq i \leq 2h \} \cup \{x^{2h+1-j}z^j-y^{2h-j}t^j: 0\leq j \leq 2h-2\}
$$
is a minimal generating set for $I_{B_h}$.
 Combining the generators  corresponding to $i=2h$ and $j=1$ we get that
$$
  u=-z(z^{2h-1}-x^{2h+1})-x(x^{2h}z-y^{2h-1}t)= xy^{2h-1}t-z^{2h} \in I_{B_h},
$$
hence
$\ini(u)=xy^{2h-1}t \in \ini(I_{B_h})$, where one uses the reverse lexicographic monomial order with $x>y>z>t$.

If  the projective monomial curve associated to $B_h$ were Cohen-Macaulay, then, as the generators of $B_h$ above are listed increasingly, by Theorem \ref{thm:main} we obtain that $xy^{2h-1} \in \ini(I_{B_h})$.
The ideal $I_{B_h}$ is generated by binomials with disjoint support, hence in the reduced Gr\"obner basis of $I_{B_h}$ there exists $v=xy^d-z^\alpha t^\beta$ with $\ini(v)=xy^d$, $0< d\leq 2h-1$ and $\alpha, \beta$ nonnegative integers.
 We denote $a_1,\dots, a_4$ the generators of $B_h$ in the given order.
Since $a_1<a_2<a_3<a_4$ we have $(d+1) a_2> a_1+ d a_2 =\alpha a_3+ \beta a_4 > (\alpha + \beta) a_2$, hence $\alpha+\beta \leq 2h-1$.

If $\alpha+\beta= 2h-1$, after  adding to $v$ the binomial $z^\alpha t^\beta-y^\beta x^{\alpha+2}$ from the given minimal generating set of $I_{B_h}$, we obtain that $xy^d-  x^{\alpha+2} y^\beta \in I_{B_h}$. Thus $\beta \leq d$ and
$y^{d-\beta}-x^{\alpha+1} \in I_{B_h}$, which is false, since $d<2h$ and one can see from $\mathcal{F}$ that  $2h\cdot a_2$ is the smallest positive multiple
of $a_2$ which is in the semigroup generated by $a_1, a_3, a_4$.

Thus $\alpha+\beta < 2h-1$. If we denote $\bar{I}=I_{B_h} \mod  x \subset K[y,z,t]$, then given  $\mathcal{F}$ it follows that $\bar{I}=(yz)+(t,z)^{2h-1}+y^2(y,t)^{2h-1}$.
It is easy to see that the monomial $\bar{v}=z^\alpha t^\beta$ is not in $\bar{I}$, which is a contradiction.

Therefore, we proved the following proposition, which was first obtained by Cavaliere and Niesi in \cite[Remark 5.4]{CN}, as an application of their criterion from Corollary \ref{cor:cn}.

\begin{Proposition} (Cavaliere and Niesi, \cite{CN})
\label{prop:bresisnsky}
The projective monomial curve associated to $B_h$ is not arithmetically Cohen-Macaulay, for any $h \geq 2$.
\end{Proposition}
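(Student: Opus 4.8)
The plan is to apply the criterion of Theorem~\ref{thm:main}, specifically the equivalence (a)~\iff~(d): the curve is arithmetically Cohen--Macaulay exactly when the smallest variable — here $t$, which corresponds to the largest generator $a_4$ — divides no element of $G(\ini(I_{B_h}))$ for the reverse lexicographic order with $x>y>z>t$. So my goal is to force a contradiction from the assumption that no minimal generator of the initial ideal involves $t$. First I would exhibit a single binomial of $I_{B_h}$ whose leading term is a multiple of $t$: combining the two members of $\mathcal{F}$ indexed by $i=2h$ and $j=1$ produces $u=xy^{2h-1}t-z^{2h}$, so $xy^{2h-1}t\in\ini(I_{B_h})$. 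If the curve were arithmetically Cohen--Macaulay, the minimal generator of $\ini(I_{B_h})$ dividing $xy^{2h-1}t$ would be $t$-free, hence would already divide $xy^{2h-1}$, forcing $xy^{2h-1}\in\ini(I_{B_h})$.

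Since $I_{B_h}$ is toric, its reduced Gr\"obner basis consists of binomials with disjoint support, so there is a basis element whose leading term divides $xy^{2h-1}$. Using that no pure power $y^d$ with $d<2h$ can be a leading term — because, as one reads off from $\mathcal{F}$, $2h\cdot a_2$ is the smallest positive multiple of $a_2$ lying in the semigroup generated by $a_1,a_3,a_4$ — together with disjointness of supports, this basis element takes the form $v=xy^d-z^\alpha t^\beta$ with $0<d\leq 2h-1$ and $\alpha,\beta\geq 0$. The next step is a degree estimate bounding $\alpha+\beta$: comparing $\ab$-degrees across $v$ and using $a_1<a_2<a_3<a_4$ gives $(\alpha+\beta)a_2<\alpha a_3+\beta a_4=a_1+da_2<(d+1)a_2\leq 2h\,a_2$, whence $\alpha+\beta\leq 2h-1$.

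I would then split into two cases. When $\alpha+\beta=2h-1$, the monomial $z^\alpha t^\beta$ is the leading term of the $\mathcal{F}$-generator $z^\alpha t^\beta-x^{\alpha+2}y^\beta$; adding it to $v$ yields $xy^d-x^{\alpha+2}y^\beta\in I_{B_h}$, and cancelling the common factor $xy^\beta$ (legitimate since $K[x,y,z,t]/I_{B_h}$ is a domain, which also forces $\beta\leq d$) gives $y^{d-\beta}-x^{\alpha+1}\in I_{B_h}$. This records a relation $(d-\beta)a_2=(\alpha+1)a_1$ with $0<d-\beta<2h$, contradicting the minimality of $2h\,a_2$ noted above. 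For the remaining case $\alpha+\beta<2h-1$, the plan is to reduce modulo $x$: writing $\bar I=(I_{B_h}+(x))/(x)\subset K[y,z,t]$, the binomial $v$ gives $z^\alpha t^\beta\in\bar I$, while reading $\mathcal{F}$ modulo $x$ yields the explicit description $\bar I=(yz)+(z,t)^{2h-1}+y^2(y,t)^{2h-1}$. Since $z^\alpha t^\beta$ has $z$--$t$-degree below $2h-1$ and carries no factor of $y$, it lies in none of these three pieces, a contradiction.

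Both cases being impossible, the curve associated to $B_h$ cannot be arithmetically Cohen--Macaulay. I expect the genuinely delicate steps to be the reduction to a single Gr\"obner basis binomial $v$ of the displayed shape (which relies on the minimality property of $2h\,a_2$ and on disjointness of supports) and the explicit computation of $\bar I$ with the membership test $z^\alpha t^\beta\notin\bar I$; by contrast, the $\ab$-degree bookkeeping that bounds $\alpha+\beta$ is routine once $\mathcal{F}$ is available.
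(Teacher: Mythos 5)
Your proof is correct and follows essentially the same route as the paper's own argument: the same binomial $u=xy^{2h-1}t-z^{2h}$ obtained from the $i=2h$ and $j=1$ members of $\mathcal{F}$, the same appeal to Theorem~\ref{thm:main} to conclude $xy^{2h-1}\in\ini(I_{B_h})$, the same reduced Gr\"obner basis element $v=xy^d-z^\alpha t^\beta$ with the same bound $\alpha+\beta\leq 2h-1$, and the same two-case contradiction (adding the generator $z^\alpha t^\beta-x^{\alpha+2}y^\beta$ when $\alpha+\beta=2h-1$, and reducing modulo $x$ when $\alpha+\beta<2h-1$). The only difference is cosmetic: you make explicit why no pure power $y^d$ with $d<2h$ can be a leading term, a point the paper leaves implicit.
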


\subsection{Arslan semigroups}

For $h\geq 2$, let
$$
A_h=\langle h(h+1), h(h+1)+1, (h+1)^2, (h+1)^2+1 \rangle.
$$
This family of numerical semigroups was studied by Arslan  who shows
  in \cite[Proposition 3.2]{Arslan} that the defining ideal of $K[A_h]$ is
\begin{multline}
\label{eq:eq-arslan}
I_{A_h}= (  x^{h-i}z^{i+1}-y^{h-i+1}t^i: 0\leq i < h) + \\
			 (  x^{i+1}y^{h-i} - z^i t^{h-i} : 0\leq i \leq h)+  (  xt-yz).
\end{multline}

\begin{Proposition}
\label{prop:arslan}
The  projective monomial curve associated to $A_h$ is aritmetically Cohen-Macaulay, for any $h\geq 2$.
\end{Proposition}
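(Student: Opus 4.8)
The plan is to apply the criterion of Theorem~\ref{thm:main}(d): I will show that $x_n$ --- here the variable $t$, which corresponds to the largest generator $a_4 = (h+1)^2+1$ --- does not divide any element of $G(\ini_<(I_{A_h}))$, where $<$ is the reverse lexicographic order with $x>y>z>t$. Concretely, I would first exhibit a candidate reduced Gr\"obner basis $\mathcal{G}$ for $I_{A_h}$ starting from the generators in \eqref{eq:eq-arslan}, then verify it is a Gr\"obner basis by checking all $S$-pairs reduce to zero (à la the Buchberger criterion, \cite[Theorem 2.14]{EH}), exactly as was done in Proposition~\ref{prop:big-ini}. Once $\mathcal{G}$ is in hand, I would read off $G(\ini_<(I_{A_h}))$ and confirm that $t$ appears in none of the leading monomials.

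The first step is to pin down the leading terms of the given generators. Using the ordering of the generators $a_1<a_2<a_3<a_4$, the binomial $xt-yz$ has leading term $yz$ (since $\deg_{\ab}(yz)=a_2+a_3 = \deg_\ab(xt)=a_1+a_4$ forces a tie, broken by revlex against the smallest variable $t$, so $yz>xt$). For the family $x^{h-i}z^{i+1}-y^{h-i+1}t^i$ I expect the leading term to be the $t$-free monomial $x^{h-i}z^{i+1}$, and for $x^{i+1}y^{h-i}-z^it^{h-i}$ the leading term to be $x^{i+1}y^{h-i}$; in each case revlex with $t$ smallest should select the monomial not involving $t$, which is precisely the behavior we need. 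I would verify these leading-term assignments carefully, as the whole argument hinges on $t$ never being a leading variable.

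The main obstacle I anticipate is producing a \emph{complete} Gr\"obner basis: the generators in \eqref{eq:eq-arslan} need not already be one, so the $S$-pair reductions will likely force new binomials into $\mathcal{G}$ (as the $g_i$'s appeared in Proposition~\ref{prop:big-ini}), and I must check that every newly produced leading term remains free of $t$. The cleanest route may be to avoid computing the full basis explicitly: instead I would argue directly that \emph{no} binomial $f_w \in I_{A_h}$ with $t \mid \ini_<(f_w)$ can be a minimal element of a reduced Gr\"obner basis. By Theorem~\ref{thm:apery-cm}, it suffices to show that $\deg_{\ab'}(\varphi_\ab(h)) \in \Ap(H',a_4)$ for every $h \in \Ap(H,a_4)$, equivalently (Corollary~\ref{cor:cn}) that $\Ap(H,a_4)$ is good. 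Since the Arslan generators are tightly structured, I would compute $\Ap(H,a_4)$ and the dual $\Ap(H',a_4)$ explicitly and match residues: the symmetry $a_3-a_1 = a_4-a_2 = $ (a constant) and the near-arithmetic form of the generators should make the Ap\'ery sets transparent, and the goodness check should then reduce to elementary semigroup arithmetic. Whichever of the two routes (direct Gr\"obner computation versus the Ap\'ery/goodness criterion) turns out shorter, the crux is the same: confirming that the combinatorics of $A_h$ keep the largest variable out of all relevant leading terms.
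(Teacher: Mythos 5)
Your strategy is the paper's strategy---apply Theorem~\ref{thm:main}(d) for the revlex order with $x>y>z>t$, exhibit a Gr\"obner basis of $I_{A_h}$, and check that no leading monomial involves $t$---and your anticipated obstacle is real: the generators in \eqref{eq:eq-arslan} are \emph{not} yet a Gr\"obner basis. The paper augments them by exactly one extra binomial, namely the $i=h$ member of the first family, $g_h=z^{h+1}-yt^h$, and claims $\mathcal{G}=\{g_0,\dots,g_h,f_0,\dots,f_h,f\}$ is the reduced Gr\"obner basis. But here is the genuine gap: neither of your two routes is actually carried out. You never write down the completed basis, never perform the S-pair reductions, and never compute the Ap\'ery sets for the alternative route via Theorem~\ref{thm:apery-cm}/Corollary~\ref{cor:cn}; every substantive step is deferred to ``I would.'' The entire content of the paper's proof consists precisely of those verifications (roughly ten families of explicit S-pair computations, e.g.\ $S(g_i,f)=tg_{i-1}$, $S(f_i,f)=tf_{i+1}$, $S(f_j,g_i)$ splitting into the cases $i+j<h$ and $i+j\geq h$, etc.), and the proposition is exactly as hard as that bookkeeping. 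A plan that names the right criterion but proves no S-pair reduces to zero, and computes no Ap\'ery set, does not yet establish the statement.

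There is also a concrete error in the one place where you do commit to details: for $i=0$ the first family gives $g_0=x^hz-y^{h+1}$, whose two monomials are \emph{both} $t$-free (so your heuristic ``revlex picks the monomial not involving $t$'' is silent), and the revlex leading term is $y^{h+1}$, not $x^hz$ as you predict: both have degree $h+1$, and $x^hz$ is smaller because it carries the positive power of $z$. This slip does not threaten the final conclusion (either monomial is $t$-free), but it changes which S-pairs must be formed and how they reduce---the paper's very first reduction, $S(g_0,f)=xg_1$, cancels the monomial $y^{h+1}z$ and only makes sense with $\ini(g_0)=y^{h+1}$---so the ``careful verification'' you promise would have to catch it before the Buchberger check could even be set up correctly.
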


\begin{proof}

Letting $g_i=   x^{h-i}z^{i+1}-y^{h-i+1}t^i$  for $0\leq i \leq  h$, $f_i= x^{i+1}y^{h-i} - z^i t^{h-i}$ for $0\leq i \leq h$ and $f=xt-yz$, we claim that
$\mathcal{G}=\{ g_0, \dots, g_h, f_0, \dots, f_h, f\}$ is the reduced Gr\"obner basis of $I_{A_h}$ with respect to the reverse lexicographic term order with $x>y>z>t$.
As a consequence, by inspecting the leading monomials we may use Theorem \ref{thm:main} to conclude 
the desired statement.

We   show that all the $S$-pairs of binomials in $\mathcal{G}$ reduce to $0$ with respect to $\mathcal{G}$.
Therefore, by Buchberger's criterion (\cite[Theorem 2.14]{EH})  it follows that  $\mathcal{G}$ is a Gr\"obner basis for $I_{A_h}$.

$S(g_0, f)= zg_0+ y^h f=x^hz^2-xy^ht=xg_1  \stackrel{\mathcal{G}}{\rightarrow} 0$. \\
For $1 \leq i \leq h$:

 $S(g_i, f)=yg_i- x^{h-i}z^if= x^{h-i+1}z^it- y^{h-i+2}t^i= tg_{i-1}  \stackrel{\mathcal{G}}{\rightarrow} 0$. \\
For $0\leq i <h$:

$S(f_i, f)= z f_i-x^{i+1}y^{h-i-1}f=x^{i+2}y^{h-i-1}t-z^{i+1}t^{h-i}=t f_{i+1}  \stackrel{\mathcal{G}}{\rightarrow} 0$.\\
Also,
$S(f_h, f)  \stackrel{\mathcal{G}}{\rightarrow} 0$ since $\gcd(\ini(f_h), \ini(f))=\gcd (x^{h+1}, yz)=1$.\\
For $1\leq i \leq h$:

$S(g_i, g_0)  \stackrel{\mathcal{G}}{\rightarrow} 0$ since $\gcd(\ini(g_i), \ini(g_0))=1$.\\
For $0\leq i\leq  h$:

$S(f_i, g_0)= y^{i+1}f_i+x^{i+1}g_0= x^{h+i+1}z-y^{i+1}z^i t^{h-i}=  x^i(x^{h+1}-z^h)z+   z^i(x^i z^{h-i+1}-y^{i+1}t^{h-i})=x^i z f_a+ z^i g_{h-i}
 \stackrel{\mathcal{G}}{\rightarrow} 0.$\\
For $0\leq j<i \leq h$:

$S(g_j, g_i)= z^{i-j}g_j-x^{i-j}g_i=y^{h-j+1}z^{i-j}t^j-x^{i-j}y^{h-i+1}t^i= y^{h-i+1}t^j (y^{i-j}z^{i-j}-x^{i-j}t^{i-j})= y^{h-i+1}t^j \cdot f\cdot (\dots)  \stackrel{\mathcal{G}}{\rightarrow} 0$. \\
For $1\leq i \leq a$, $0\leq j \leq a$ with $i\leq h-j-1$, i.e. $i+j <h$:

$ S(f_j, g_i)= x^{h-i-j-1}z^{i+1}f_j - y^{h-j} g_i =y^{2h-i-j+1}t^{i}-x^{h-i-j-1}z^{i+j+1}t^{h-j}=y^{h-i-j} t^{i} (y^{h+1}-x^hz) +x^{h-i-j-1}z t^i (x^{i+j+1}y^{h-i-j}-z^{j+i}t^{h-j-i})
=-t^i y^{h-i-j}g_0+ x^{h-i-j-1}zt^i f_{i+j}  \stackrel{\mathcal{G}}{\rightarrow} 0$.\\
For $1\leq i \leq a$, $0\leq j \leq a$ with $i> h-j-1$, i.e. $i+j \geq h$:

$S(f_j, g_i)= z^{i+1}f_j-x^{i+j+1-h}y^{h-j}g_i= x^{i+j+1-h}y^{2h-i-j+1}t^i-z^{i+j+1}t^{h-j} = \\ yt^i  (x^{i+j+1-h}y^{2h-i-j}- z^{i+j-h}t^{2h-i-j}) + z^{i+j-h}t^{h-j} (yt^h-z^{h+1})  = yt^i f_{i+j-h} + z^{i+j-h}t^{h-j} g_h  \stackrel{\mathcal{G}}{\rightarrow} 0$.\\
For $0\leq j< i \leq a$:

$S(f_j, f_i)= x^{i-j}f_j-y^{i-j}f_i= y^{i-j}z^i t^{h-i}-x^{i-j}z^j t^{h-j}= z^j t^{h-i} (y^{i-j}z^{i-j}-x^{i-j}t^{i-j})= z^j t^{h-i} (yz-xt) \cdot (\dots)=  z^j t^{h-i}\cdot f \cdot (\dots)   \stackrel{\mathcal{G}}{\rightarrow} 0$.
\end{proof}

\medskip
{\bf Acknowledgement}.
We gratefully acknowledge the use of the  Singular (\cite{Sing}) software for our computations.
 Dumitru Stamate  was supported by the University of Bucharest, Faculty of Mathematics and
Computer Science, through the 2017 Mobility Fund.
\medskip

{}

\begin{thebibliography}{}

\bibitem{Arslan} F.~Arslan, \textit{Cohen-Macaulayness of tangent cones},   Proc. Amer. Math. Soc. {\bf 128} (1999), 2243--2251.

\bibitem{AMS} F.~Arslan, P.~Mete, M.~\c{S}ahin, \textit{Gluing and Hilbert functions of monomial curves}, Proc. Amer. Math. Soc. {\bf 137} (2009), 2225--2232.

\bibitem{Bresinsky} H.~Bresinsky, \textit{On prime ideals with  generic zero $x_i=t^{n_i}$},  Proc. Amer. Math. Soc. {\bf 47} (1975), 329--332.

\bibitem{BSV} H.~Bresinsky, P.~Schenzel, W.~Vogel, \textit{On Liaison, Arithmetical Buchsbaum Curves and
Monomial Curves in $\PP^3$}, J.~Algebra {\bf 86} (1984), 283--301.


\bibitem{CN} M.P.~Cavaliere, G.~Niesi, \textit{On monomial curves  and Cohen-Macaulay type}, Manuscripta math. {\bf 42} (1983), 147--159.
    
    
\bibitem{CS} M.~Cimpoea\c s, D.I.~Stamate, \textit{Gr\"obner--nice pairs of ideals}, in preparation.

\bibitem{CLO} D.~Cox, J.~Little, D.~O'Shea, \textit{Ideals, Varieties, and Algorithms}, Third  Edition, Springer, 2007.

\bibitem{Sing} W.~Decker, G.-M.~Greuel, G.~Pfister, H.~Sch{\"o}nemann,
\newblock {\sc Singular} {3-1-6} --- {A} computer algebra system for polynomial computations.
\newblock {http://www.singular.uni-kl.de} (2012).

\bibitem{Eis} D.~Eisenbud, \textit{Commutative Algebra with a View Toward Algebraic Geometry}, Graduate Texts in Mathematics {\bf 150}, Springer, 1995.


\bibitem{EH} V.~Ene, J.~Herzog, \textit{Gr\"obner bases in commutative algebra},  Graduate Studies in Mathematics {\bf 130},
American Mathematical Society, 2012.


\bibitem{GSW} S.~Goto, N.~Suzuki, K.-i~Watanabe, \textit{On affine semigroup rings}, Japan J. Math. {\bf  2} (1976),  1--12.


\bibitem{He-semi} J.~Herzog, \textit{Generators and relations of Abelian semigroups and semigroup rings}, Manuscripta Math. {\bf 3} (1970), 175--193.

\bibitem{HH} J.~Herzog, T.~Hibi, \textit{Monomial Ideals}, Graduate Texts in Mathematics {\bf 260}, Springer--Verlag, London, 2011.

\bibitem{HeS} J.~Herzog, D.I.~Stamate, \textit{On the defining equations of  the tangent cone of a numerical semigroup ring}, J. Algebra {\bf 418} (2014), 8--28.

\bibitem{Kamoi-commalg} Y.~Kamoi, \textit{Defining ideals of Cohen-Macaulay semigroup rings}, Comm. Alg. {\bf 20} (1992), 3163--3189.

\bibitem{Kamoi-nagoya} Y.~Kamoi, \textit{Defining ideals of Buchsbaum semigroup rings},  Nagoya Math.~J. {\bf 136} (1994), 115--131.


\bibitem{Morales} M.~Morales, N.~Thi Dung, \textit{Gr\"obner basis, a ``pseudo-polynomial" algorithm for computing the Frobenius number}, Preprint 2015, 	 arXiv:1510.01973 [math.AC].

\bibitem{Roune} B.H.~Roune, \textit{Solving thousand-digit Frobenius problems using Gr\"obner bases}, J. Symb. Comput. {\bf 43} (2008), 1--7.

\bibitem{RGU} J.C.~Rosales, P.A.~Garcia-S\'anchez, J.M.~Urbano-Blanco, \textit{On Cohen-Macaulay subsemigroups of $\NN^2$}, Comm. Alg. {\bf 26} (1998), 2543--2558.


\bibitem{Vu} T.~Vu, \textit{Periodicity of Betti numbers of monomial curves}, J. Algebra {\bf 418} (2014), 66--90.


\end{thebibliography}
\end{document}